\pgfplotsset{compat=newest}
\definecolor{teal}{rgb}{0.0, 0.5, 0.5}
\newcounter{mnotecount}[section]
\newcommand{\rmnote}[1]{}%{\mnote{#1}}
\DeclareFontFamily{U}{mathb}{\hyphenchar\font45}
\DeclareFontShape{U}{mathb}{m}{n}{
      <5> <6> <7> <8> <9> <10> gen * mathb
      <10.95> mathb10 <12> <14.4> <17.28> <20.74> <24.88> mathb12
      }{}
\DeclareSymbolFont{mathb}{U}{mathb}{m}{n}
\let\dot\relax
\DeclareMathAccent{\dot}{0}{mathb}{"39}
\let\ddot\relax
\DeclareMathAccent{\ddot}{0}{mathb}{"3A}
\let\dddot\relax
\DeclareMathAccent{\dddot}{0}{mathb}{"3B}
\let\ddddot\relax
\DeclareMathAccent{\ddddot}{0}{mathb}{"3C}
\theoremstyle{plain}
\newtheorem*{theorem*}{Theorem}
\newtheorem{theorem}{Theorem}[section]
\newtheorem*{lemma*}{Lemma}
\newtheorem{lemma}[theorem]{Lemma}
\newtheorem*{assumption*}{Assumption}
\newtheorem*{proposition*}{Proposition}
\newtheorem{proposition}[theorem]{Proposition}
\newtheorem*{corollary*}{Corollary}
\newtheorem{corollary}[theorem]{Corollary}
\newtheorem*{claim*}{Claim}
\newtheorem*{conjecture*}{Conjecture}
\newtheorem*{question*}{Question}
\theoremstyle{definition}
\newtheorem*{definition*}{Definition}
\newtheorem*{example*}{Example}
\newtheorem{example}[theorem]{Example}
\newtheorem*{algorithm*}{Algorithm}
\newtheorem*{remark*}{Remark}
\newtheorem*{remarks*}{Remarks}
\newtheorem{remark}[theorem]{Remark}
\newtheorem*{convention*}{Convention}
\theoremstyle{plain}
\newtheorem{theoremA}{Theorem}
\def\al{\alpha}
\def\be{\beta}
\def\de{\delta}
\def\ep{\epsilon}
\def\th{\theta}
\def\si{\sigma}
\def\vh{\varphi}
\def\ch{\chi}
\def\ps{\psi}
\def\om{\omega}
\def\Ga{\Gamma}
\def\Th{\Theta}
\def\Si{\Sigma}
\def\Ph{\Phi}
\def\C{\mathbb{C}}
\def\K{\mathbb{K}}
\def\N{\mathbb{N}}
\def\R{\mathbb{R}}
\def\Z{\mathbb{Z}}
\def\cC{\mathcal{C}}
\def\cP{\mathcal{P}}
\def\sC{\mathscr{C}}
\def\sU{\mathscr{U}}
\def\p{\partial}
\renewcommand{\Re}{\mathrm{Re}}
\def\<{\langle}
\def\>{\rangle}
\renewcommand{\o}{\circ}
\def\ol{\overline}
\let\on=\operatorname
\newcommand{\sr}[1]%
{\ifmmode{}^\dagger\else${}^\dagger$\fi\ifvmode
\vbox to 0pt{\vss
 \hbox to 0pt{\hskip\hsize\hskip1em
 \vbox{\hsize3cm\raggedright\pretolerance10000
 \noindent #1\hfill}\hss}\vss}\else
 \vadjust{\vbox to0pt{\vss%
 \hbox to 0pt{\hskip\hsize\hskip1em%
 \vbox{\hsize3cm\raggedright\pretolerance10000%
 \noindent #1\hfill}\hss}\vss}}\fi%
}
\providecommand{\mapsfrom}{\kern.2em%
\setbox0=\hbox{$\leftarrow$\kern-.10em\rule[0.26mm]{0.1mm}{1.3mm}}\box0%
\kern.3em}
\title[On real analytic functions on closed subanalytic domains]
{On real analytic functions on closed subanalytic domains}
\author[A.~Rainer]{Armin Rainer}
\address{Fakult\"at f\"ur Mathematik, Universit\"at Wien,
Oskar-Morgenstern-Platz~1, A-1090 Wien, Austria}
\email{armin.rainer@univie.ac.at}
\begin{document}

\begin{abstract}
We show that a function $f : X \to \R$ 
defined on a closed uniformly polynomially cuspidal set $X$ in $\R^n$
is real analytic if and only if $f$ is smooth and all its composites with germs of polynomial curves 
in $X$ are real analytic.
The degree of the polynomial curves needed for this 
is effectively related to the regularity of the boundary of $X$.
For instance, if the boundary of $X$ is locally Lipschitz, then 
polynomial curves of degree $2$ suffice.
In this Lipschitz case, we also prove that a function $f : X \to \R$ 
is real analytic if and only if all its composites with germs of quadratic polynomial maps in two variables with images in $X$ 
are real analytic; here it is not necessary to assume that $f$ is smooth.
\end{abstract}

\thanks{Supported by FWF-Project P 32905-N}
\keywords{Real analyticity on closed sets, cuspidality of sets,
Bochnak--Siciak theorem, subanalytic sets, uniformly polynomially cuspidal sets}
\subjclass[2020]{
	%26B05, 	    %Continuity and differentiation questions
	%26B35,  	%Special properties of functions of several variables, Hölder conditions, etc.
	26E05,      %Real analytic functions
    26E10,  	%$C^\infty$-functions, quasi-analytic functions
	32B20,  	%Semi-analytic sets and subanalytic sets
    58C20,    %Differentiation theory (Gateaux, Fréchet, etc.) on manifolds
	58C25}  	%Differentiable maps
	%58B10,  	%Differentiability questions
	%58B25,  	%Group structures and generalizations on infinite-dimensional manifolds
	%58C07,  	%Continuity properties of mappings
	%58D05} 	    %Groups of diffeomorphisms and homeomorphisms as manifolds
%\dedicatory{dedicatory}
\date{\today}

\maketitle

%\tableofcontents
\section{Introduction}

In this note, we are interested in Hartogs-type characterizations of real analytic functions.
Let us recall two fundamental results due to Bochnak and Siciak: 
let $U \subseteq \R^n$ be a nonempty open set and $f : U \to \R$ any function.

\begin{theoremA}[{\cite{BochnakSiciak71,Siciak70}}] \label{res:A}
The function $f$ is real analytic if and only if $f$ is smooth and the restriction of $f$ to each affine line 
        that meets $U$ is real analytic.
\end{theoremA}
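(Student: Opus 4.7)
The ``only if'' direction is immediate, so assume $f$ is smooth and analytic on every affine line meeting $U$. The plan is to show, at any fixed $x_0 \in U$, that $f$ is jointly real analytic by establishing a uniform Cauchy-type estimate on directional derivatives in a neighborhood of $x_0$. I would fix a closed ball $B \subseteq U$ centered at $x_0$; for each $x \in B$ and each $v \in S^{n-1}$, the hypothesis applied to the line $\{x + tv : t \in \R\}$ gives that $t \mapsto f(x+tv)$ is real analytic near $0$, so Cauchy estimates produce $C(x,v), \rho(x,v) > 0$ with
\[
|\p_v^k f(x)| \le C(x,v)\, k!\, \rho(x,v)^{-k}, \qquad k \ge 0.
\]

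\textbf{Baire step.} To upgrade these pointwise-in-$(x,v)$ bounds to something uniform, I would consider the sets
\[
E_N := \{(x,v) \in B \times S^{n-1} : |\p_v^k f(x)| \le N^{k+1} k! \text{ for every } k \ge 0\}.
\]
Each $E_N$ is closed because $(x,v) \mapsto \p_v^k f(x)$ is continuous (this is where smoothness of $f$ enters), and by the Cauchy estimate the $E_N$ cover $B \times S^{n-1}$. Since $B \times S^{n-1}$ is a complete metric space, Baire yields some $N$ for which $E_N$ contains an open product $V \times W$, on which the bound $|\p_v^k f(x)| \le N^{k+1} k!$ holds uniformly for every $k$.

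\textbf{Extension to all directions and polarization.} Next I would pass from the open cone of directions generated by $W \subseteq S^{n-1}$ to all of $\R^n$. Since $v \mapsto \p_v^k f(x)$ is a homogeneous polynomial of degree $k$, a Bernstein--Markov-type inequality on the sphere (or, equivalently, writing an arbitrary $v$ as a bounded linear combination of a basis lying in $W$ and expanding via the polarization identity) produces
\[
|\p_v^k f(x)| \le A\, k!\, R^{-k}\, |v|^k, \qquad v \in \R^n,\ x \in V',
\]
on a smaller open set $V' \subseteq V$. The standard polarization identity for symmetric multilinear forms then converts this diagonal bound into
\[
|d^k f(x)(v_1, \dots, v_k)| \le A'\, k!\, R'^{-k}\, |v_1|\cdots|v_k|,
\]
which is exactly the Cauchy estimate guaranteeing absolute convergence of the Taylor series of $f$ to $f$ on a ball around each point of $V'$. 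Varying $x_0$ then exhausts $U$.

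\textbf{Main obstacle.} The Baire step is routine. The delicate point will be the passage from directional bounds on an open cone to bounds in every direction with multiplicative constants of the correct order in the degree $k$: a naive comparison of supremum norms of polynomials on an open set versus all of $S^{n-1}$ would swamp the geometric decay $R^{-k}$, so one must invoke either sharp Bernstein-type inequalities for homogeneous polynomials or the polarization identity directly to keep the $k$-dependence of the constants under control.
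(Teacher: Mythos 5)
The paper does not prove Theorem~A; it imports it from Siciak and Bochnak--Siciak. Judged on its own, your plan follows the classical Baire-category route, but it has a genuine gap at the very end. Running Baire on the product $B \times \mathbb{S}^{n-1}$ only tells you that \emph{some} $E_N$ has nonempty interior, hence contains \emph{some} open product $V \times W$; you have no control over where $V$ sits inside $B$, and in particular no reason for $x_0 \in V$. So the argument as written proves that $f$ is analytic on a nonempty open subset of every ball, i.e.\ on a dense open subset of $U$, and the closing sentence ``varying $x_0$ then exhausts $U$'' does not follow: a smooth function can be analytic on a dense open set without being analytic everywhere, and you have not used the line hypothesis to propagate analyticity across the residual closed bad set.

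The standard repair is to run the Baire argument \emph{at the fixed point} $x_0$, in the direction variable only: the sets $\{v \in \mathbb{S}^{n-1} : |\partial_v^k f(x_0)| \le N^{k+1} k! \ \forall k\}$ are closed (each $v \mapsto \partial_v^k f(x_0)$ is a homogeneous polynomial) and cover the sphere, so some one of them contains an open cap $W$. One then extends the bound from $W$ to all directions by the one-variable Chebyshev growth estimate for polynomials of degree $k$ bounded on a subinterval (this costs only a factor $C^k$, so the geometric decay survives), concluding that the Taylor series $T_{x_0}f$ converges on some ball $B(x_0,r)$. Finally --- and this is the step your plan is missing --- one shows the sum $g$ of $T_{x_0}f$ actually equals $f$: for each direction $v$ both $t \mapsto f(x_0+tv)$ and $t \mapsto g(x_0+tv)$ are analytic on $(-r,r)$ with identical derivatives at $t=0$, so they coincide by the one-variable identity theorem; hence $g=f$ on $B(x_0,r)$. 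This works at \emph{every} $x_0\in U$, with no density argument needed. A secondary point: the polarization identity as you describe it (expanding $v$ over a basis drawn from $W$) evaluates the homogeneous polynomial at points such as $w_i - w_j$, which lie far outside the cone over $W$ where your bound holds, so for the cone-to-sphere step you really must use the Bernstein/Chebyshev-type inequality rather than polarization; polarization is only safe afterwards, once you have the bound on a full ball centered at the origin.
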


\begin{theoremA}[{\cite{Bochnak:1971aa,Bochnak:2018aa}}] \label{res:B}
The function    $f$ is real analytic if and only if the restriction of $f$ to each affine $2$-plane that meets $U$ is real analytic.
\end{theoremA}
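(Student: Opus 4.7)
The forward direction is immediate, since the restriction of a real analytic function to any affine subspace is real analytic. The converse is the substantive part: one must infer real analyticity from mere $2$-plane analyticity, with no a priori regularity of $f$. The plan is to construct, at each $x_0 \in U$, a convergent multivariate power series $\sum_\al a_\al (x-x_0)^\al$ representing $f$ near $x_0$.

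First I would establish that $f$ is locally bounded on $U$. Each restriction of $f$ to a $2$-plane meeting $U$ is real analytic, hence continuous and locally bounded on that plane. A Baire category argument over the Grassmannian of $2$-planes through a fixed $x_0 \in U$---exploiting that a Euclidean neighborhood of $x_0$ is swept out by disks in such planes---then yields a neighborhood $V$ of $x_0$ on which $f$ is uniformly bounded by some $M$. In particular, every affine line through $x_0$ lies in some $2$-plane, so $f$ restricted to that line is real analytic, and we may expand
\[
f(x_0 + tv) = \sum_{k \ge 0} c_k(v)\, t^k
\]
for $v \in \R^n \setminus \{0\}$ and small $t$, with the homogeneity $c_k(\la v) = \la^k c_k(v)$.

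The heart of the argument is to promote each $c_k$ to a genuine polynomial on $\R^n$. For arbitrary $v_1, v_2 \in \R^n$, compare the $2$-plane expansion
\[
f(x_0 + s v_1 + t v_2) = \sum_{i,j \ge 0} b_{i,j}(v_1,v_2)\, s^i t^j
\]
with the line expansion in direction $v_1 + \la v_2$: setting $s = t$ and replacing $v_2$ by $\la v_2$ gives $c_n(v_1 + \la v_2) = \sum_{j=0}^n b_{n-j,j}(v_1,v_2)\, \la^j$, a polynomial of degree $\le n$ in $\la$. Since $v_1,v_2$ are arbitrary, $c_n$ is polynomial of degree $\le n$ on every affine line in $\R^n$; a standard induction on dimension (Lagrange-interpolate in one variable, then apply the inductive hypothesis to the resulting coefficient functions) upgrades this to $c_n$ being a polynomial of degree $\le n$ on all of $\R^n$. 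Combined with homogeneity, $c_n(v) = \sum_{|\al|=n} a_\al v^\al$ is a homogeneous polynomial of degree~$n$.

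It remains to verify convergence and equality. Cauchy estimates along each line through $x_0$ in $V$ give $|c_n(v)| \le M r^{-n}$ for $|v|=1$ and some fixed $r > 0$; a polarization argument converts these sup-norm bounds on $c_n$ into multi-index coefficient bounds $|a_\al| \le C R^{-|\al|}$, ensuring absolute convergence of $\sum_\al a_\al (x-x_0)^\al$ on a polydisk around $x_0$. The sum agrees with the $1$D Taylor expansion of $f$ along every line through $x_0$, hence coincides with $f$ on a neighborhood of $x_0$. The principal obstacles are, in order of subtlety, the Baire-type local boundedness step (one must pass from planewise control to control on a genuinely $n$-dimensional neighborhood without a priori continuity), and the polarization step converting the sphere-norm estimates on $c_n$ into coefficient estimates robust enough to yield convergence of the multivariate series.
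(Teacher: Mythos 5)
First, a point of order: the paper does not prove this statement---it is quoted from Bochnak--Siciak \cite{Bochnak:1971aa,Bochnak:2018aa}---so there is no internal proof to compare yours against, and I am judging your argument on its own terms. Your architecture is the classical one (expand $f$ along lines through $x_0$ as $\sum_k c_k(v)t^k$, use the $2$-plane expansions to show each $c_k$ is a homogeneous polynomial of degree $k$, then sum), and the middle step---polynomiality of the $c_k$ via the identity $c_k(v_1+\la v_2)=\sum_{i+j=k}b_{i,j}\la^j$ and the Lagrange-interpolation induction---is sound and genuinely needs no continuity of $f$. But two steps have real gaps. (A) The Baire argument for local boundedness does not close: the sets $E_m=\{P: |f|\le m \text{ on } P\cap \overline{B}(x_0,1/m)\}$ must be \emph{closed} in the Grassmannian, and closedness is exactly the continuity you do not yet have---a point of the limit plane is approximated by points of the nearby planes that share no common $2$-plane with it, so planewise continuity gives nothing. (B) More seriously, the ``Cauchy estimates along each line'' are false as stated: a bound $|f|\le M$ on a \emph{real} interval gives no control on Taylor coefficients of a real analytic function (e.g.\ $g_\ep(t)=\ep^2/(\ep^2+t^2)$ is bounded by $1$ on all of $\R$ yet has Taylor coefficients $(-1)^j\ep^{-2j}$). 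Cauchy estimates require a complex disc of uniform radius, which is precisely the uniformity not available a priori; so even granting (A), the convergence step collapses.

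The standard repair makes step (A) unnecessary: once the $c_k$ are known to be (continuous!) homogeneous polynomials, apply Baire on the sphere to $B_m=\{v\in S^{n-1}: |c_k(v)|\le m^k \text{ for all } k\}$. These sets \emph{are} closed, and they exhaust $S^{n-1}$ because each line series has positive radius of convergence. Some $B_m$ contains a spherical cap, and a Chebyshev--Bernstein-type inequality for polynomials of degree $k$ propagates the bound from the cap to $|c_k|\le (Cm)^k$ on the whole sphere. Then $\sum_k c_k(x-x_0)$ converges uniformly on a ball, its sum is real analytic (complexify the $c_k$ using coefficient bounds obtained by polarization, as you suggest), and it agrees with $f$ on every line through $x_0$ by the one-variable identity theorem, hence equals $f$ near $x_0$. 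So your skeleton is salvageable, but as written the two quantitative steps that you yourself flag as the ``principal obstacles'' are not merely subtle---they are incorrect in the form given.
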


In Theorem \ref{res:A}, the assumption that $f$ is smooth cannot be omitted.
Recently, Bochnak, Koll\'ar, and Kucharz \cite{Bochnak:2020tz} proved a global version of Theorem \ref{res:B}:  
a function $f : M \to \R$ on a real analytic manifold $M$ of dimension $n \ge 3$ is real analytic 
if $f|_N$ is real analytic for every real analytic submanifold $N \subseteq M$ 
that is homeomorphic to the $2$-sphere. 

We will investigate versions of these results on closed fat subsets $X$ of $\R^n$ with cusps 
(that $X$ is \emph{fat} means that it is contained in the closure of its interior, i.e., $X =\ol{X^\o}$).
Even if $X$ has Lipschitz boundary, we will have to 
compose $f$ with germs of quadratic polynomial maps 
instead of just affine maps. 
It will turn out that the maximal degree of the polynomial maps needed to 
detect real analyticity is strongly related to the regularity of the boundary (i.e., to the sharpness of the cusps).

A recent result of Kucharz and Kurdyka \cite{Kucharz:2023aa} (see \Cref{res:C} below)
shows that, for subanalytic functions on real analytic manifolds, 
real analyticity can be recognized by restriction to real analytic subsets of dimension one. 
We will discuss a variant on suitable closed fat subsets $X$ of $\R^n$.

This note is a natural continuation of our papers \cite{Rainer18} and 
\cite{Rainer:2021tr}. 
Before we can state the results, some terminology must be introduced.

\subsection{Plot-analytic functions}

Let $d,m,n$ be positive integers. 
Let $X \subseteq \R^n$ be nonempty.
By an \emph{$(m,d)$-plot in $X$} we mean the germ $[p]$ at $0$ of a polynomial map $p  = (p_1,\ldots,p_n): \R^m \to \R^n$ 
\begin{itemize}
    \item of degree at most $d$, i.e.,  
        $p_i \in \R[x_1,\ldots,x_m]$ and $\max_{1 \le i \le n}\deg p_i \le d$,
\item and image contained in $X$, i.e., there exists a neighborhood $U$ of $0$ in $\R^m$ such that 
    $p(U) \subseteq X$.
\end{itemize}
We will write $p$ instead of $[p]$; this slight abuse of notation will 
lead to no confusion.

Let $\cP_{m,d}(X)$ denote the set of all $(m,d)$-plots in $X$.  
We will also consider 
\[
    \cP_m(X) := \bigcup_{d =1}^\infty \cP_{m,d}(X)
\]
and call its elements \emph{polynomial $m$-plots in $X$}. 

For any function $f : X \to \R$ and any $p \in \cP_m(X)$
it is meaningful to consider the function germ $f \o p$ at $0$ in $\R^m$.
We define $\sC^\om_{m,d}(X)$ to be the set of all functions $f : X \to \R$ such that 
\begin{equation*}
    f \o p \text{ is a real analytic germ at } 0 \in \R^m, \text{ for all } p \in \cP_{m,d}(X).
\end{equation*}
Furthermore,  we set
\[
    \sC^\om_{m}(X) := \bigcap_{d=1}^\infty \sC^\om_{m,d}(X).
\]
We call the elements of $\sC^\om_{m,d}(X)$ \emph{$(m,d)$-plot-analytic functions}. 
Recall that $f : X \to \R$ is called \emph{arc-analytic} if $f \o c$ is real analytic for each germ $c$ of a real analytic arc in $X$.
The arc-analytic functions on $X$ form a subset of $\sC^\om_1(X)$.

Let $X \subseteq \R^n$ be closed.
Let $\cC^\om|_X$ (resp.\ $\cC^\infty|_X$) be the set of all functions $f : X \to \R$
such that there exist an open neighborhood $U$ of $X$ in $\R^n$ 
and a real analytic (resp.\ smooth) function $F : U \to \R$ such that $F|_X=f$.
Note that each $f \in \cC^\om|_X$ is the restriction to $X$ of a holomorphic function defined on an open neighborhood of $X$ in $\C^n$.

\subsection{UPC sets}

Let us recall (cf.\ \cite{PawluckiPlesniak86}) that 
a closed set $X \subseteq \R^n$ is called \emph{uniformly polynomially cuspidal} (UPC) 
if there exist positive integers $m,D$ and a constant $M>0$ such that for each $x \in X$ there is a polynomial curve $h_x : \R \to \R^n$
of degree at most $D$ 
such that 
\begin{enumerate}
    \item $h_x(0)=x$,
    \item $\on{dist}(h_x(t),\R^n \setminus X) \ge M\, t^m$ for all $x \in X$ and $t \in [0,1]$. 
\end{enumerate}
In that case, we say that the UPC set $X$ has the \emph{characteristic} $(m,D)$. (The constant $M$ will not be  important for us.)
Let $\on{char}(X)$ be the set of all characteristics of $X$.
Note that if $(m,D) \in \on{char}(X)$ then $(m,D) + \N^2 \subseteq \on{char}(X)$.
We define 
\begin{equation*}
    d(X) := 2 \min_{(m,D) \in \on{char}(X)} \max\{m,D\}.
\end{equation*}

Any UPC set $X$ is fat. 
We say that $X$ is \emph{simple} if each $x \in \p X$ 
has a basis of neighborhoods $\sU$ such that $U \cap X^\o$ is connected for all $U \in \sU$.
We shall see in \Cref{ex:simple} that, regarding our results, to be simple is a natural and indispensable condition.

An important class of UPC sets is the collection of all compact fat subanalytic sets $X \subseteq \R^n$, 
see Paw{\l}ucki and Plesniak \cite{PawluckiPlesniak86}.

\subsection{H\"older and Lipschitz sets}
Let $\al \in (0,1]$ and $r,h >0$. The set 
\[
   \Ga^\al_n(r,h) := \Big\{(x',x_n) \in \R^{n-1}\times \R : |x'|<r, \, \big(\tfrac{|x'|}{r}\big)^\al < \tfrac{x_n}{h} < 1\Big\}
\]
is a \emph{truncated open $\al$-cusp}.
By an \emph{$\al$-set} we mean a closed fat set $X \subseteq \R^n$ 
such that $X^\o$ has the \emph{uniform $\al$-cusp property}: 
for each $x \in \p X$ there exist $\ep>0$, a truncated open $\al$-cusp $\Ga = \Ga^\al_n(r,h)$,
and an orthogonal linear map $A \in O(n)$ such that $y + A\Ga \subseteq X^\o$ for all 
$y \in X \cap B(x,\ep)$. 
A bounded open set in $\R^n$ has the uniform $\al$-cusp property if and only if it has $\al$-H\"older boundary; cf.\ \cite[Remark 2.1]{Rainer:2021tr}. 
We say that $X$ is a \emph{H\"older set} if $X$ is an $\al$-set for some $\al \in (0,1]$; 
$1$-sets are also called \emph{Lipschitz sets}. Note that H\"older sets are always simple; see 
\cite[Proposition 3.9]{Rainer18}.

By definition, any compact $\al$-set $X$ is a UPC set of characteristic $(\lceil \al^{-1} \rceil,1)$
so that $d(X) \le 2 \lceil \al^{-1} \rceil$,
where $\lceil x \rceil$ is the smallest integer $m \ge x$.
If $X$ is not a $\be$-set with $\lceil \be^{-1} \rceil< \lceil \al^{-1} \rceil$,
then $d(X) = 2 \lceil \al^{-1} \rceil$. For instance,
we have $$d(\ol \Ga^{1/m}_n(r,h)) = 2m$$
for
the closure $\ol \Ga^{1/m}_n(r,h)$ of $\Ga^{1/m}_n(r,h)$, where $m \in \N_{\ge 1}$.

\subsection{Results}
\subsubsection{Smooth plot-analytic functions}
The first result extends \Cref{res:A} to simple closed UPC sets 
with a precise control of the degree of the required polynomial $1$-plots in terms of $d(X)$.

\begin{theorem} \label{thm:UPC}
   Let $X \subseteq \R^n$ be a simple closed UPC set and let $d := d(X)$. Then
\[
    \sC^\om_{1,d}(X) \cap \cC^\infty|_X = \cC^\om|_X.
\]
\end{theorem}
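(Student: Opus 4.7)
The inclusion $\cC^\om|_X \subseteq \sC^\om_{1,d}(X) \cap \cC^\infty|_X$ is clear: a real analytic extension of $f$ to a neighborhood of $X$ composes with any polynomial plot to give a real analytic germ. The content is therefore the reverse inclusion. So let $f \in \sC^\om_{1,d}(X) \cap \cC^\infty|_X$ and pick a smooth extension $F$ of $f$ to an open neighborhood of $X$; the goal is to show that $F$ is real analytic on some (possibly smaller) open neighborhood of $X$.

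On the interior, the argument is nearly immediate: for any $y \in X^\o$ and any $v \in \R^n$, a short enough germ of the affine line $t \mapsto y + tv$ lies in $X^\o \subseteq X$ and is a polynomial $1$-plot of degree $1 \le d$. Hence $f|_{X^\o}$ is smooth with real analytic restrictions to all affine lines through points of $X^\o$, and \Cref{res:A} yields that $f|_{X^\o}$ is real analytic.

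The substantive task is to extend analyticity across $\partial X$. Fix $(m,D) \in \on{char}(X)$ with $\max\{m,D\} = d/2$ and let $x_0 \in \partial X$. By the UPC property, for $x$ in a neighborhood of $x_0$ in $X$ there is a polynomial curve $h_x : \R \to \R^n$ of degree $\le D \le d$ with $h_x(0) = x$ and $\on{dist}(h_x(t), \R^n \setminus X) \ge M t^m$ on $[0,1]$. The crucial point is that this cuspidal depth allows \emph{perturbation}: for any polynomial $q : \R \to \R^n$ of degree $\le d-m$ with $\sup_{[0,1]} \|q\| \le M/2$, the map
\[
 p_{x,q}(t) := h_x(t) + t^m q(t)
\]
remains a $1$-plot in $X$ of degree $\le d$. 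So each germ $f \circ p_{x,q}$ is real analytic at $0$, yielding Cauchy-type bounds on its Taylor coefficients. By Fa\`{a} di Bruno, $(f \circ p_{x,q})^{(k)}(0)$ is a polynomial expression in $\{\partial^\alpha f(x)\}_{|\alpha| \le k}$ and the Taylor coefficients of $p_{x,q}$ at $0$. Varying $q$ over a suitable finite-dimensional family in general position produces a linear system which can be inverted to express each $\partial^\alpha f(x)$ in terms of univariate derivatives of the composites, giving bounds
\[
 |\partial^\alpha f(x)| \le C(x) \cdot R(x)^{|\alpha|} \cdot |\alpha|!, \quad \alpha \in \N^n.
\]
A Baire category argument (the sets $\{x \in X : C(x) \le C,\, R(x) \le R\}$ are closed and cover a neighborhood of $x_0$ in $X$) then makes $C, R$ uniform on some open subset, and by continuity of the derivatives of $F$ these Cauchy-type estimates propagate to a full neighborhood of $x_0$ in $\R^n$, forcing $F$ to be real analytic there. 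The simpleness hypothesis enters only at the patching step: for $U$ small enough, $U \cap X^\o$ is connected, so the local analytic germ at $x_0$ is unambiguous and agrees with $f|_{X^\o}$ on the overlap, allowing the local extensions to be glued into a global real analytic function on a neighborhood of $X$.

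The main obstacle is the Fa\`{a} di Bruno inversion, which is precisely what forces the degree count $d = 2\max\{m,D\}$: the perturbation $t^m q(t)$ has degree up to $m + (d-m) = d$, and $d-m$ must be at least $\max\{m,D\}$ for the perturbation family to be rich enough to span all jets at $x$ while keeping the perturbed curves inside $X$. Making this inversion uniformly quantitative in $x$ and polynomially controlled in $|\alpha|$, so that the resulting Cauchy estimates are of genuine real analytic type, is the technical heart of the proof; the Baire category step and the interior-boundary patching are, by comparison, relatively routine.
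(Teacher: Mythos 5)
Your opening moves coincide with the paper's: the trivial inclusion, interior analyticity via \Cref{res:A}, localization at a boundary point, and the idea of perturbing the UPC curve $h_x$ in transverse directions using the cuspidal depth $Mt^m$. But the core of the argument has two genuine gaps. The smaller one: a $(1,d)$-plot must map a \emph{two-sided} neighborhood of $0\in\R$ into $X$, whereas your curves $p_{x,q}(t)=h_x(t)+t^mq(t)$ are only controlled for $t\in[0,1]$; nothing places them in $X$ for $t<0$. Repairing this forces a reparametrization by squares (this is in fact where the factor $2$ in $d(X)=2\max\{m,D\}$ comes from), which shrinks the admissible perturbation family well below your budget $\deg q\le d-m$.

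The serious gap is the Fa\`a di Bruno inversion itself. Every admissible curve approaches $x$ with tangency of order at least $m$ in the transverse directions, so a derivative $\p^\al f(x)$ with $|\al|=k$ first appears in the Taylor coefficient of $f\o p_{x,q}$ of order roughly $mk$; the Cauchy estimate for the analytic germ $f\o p_{x,q}$ bounds that coefficient by $C\rho^{mk}(mk)!$, and $(mk)!$ is comparable to $(k!)^m$ up to geometric factors. Hence the inversion can only deliver Gevrey-type bounds $|\p^\al f(x)|\lesssim CR^{|\al|}(|\al|!)^m$, not the analytic bounds you assert; this is not deferred bookkeeping but the point where elementary coefficient comparison fails. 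The paper circumvents it by assembling the perturbations into a \emph{single} polynomial map $\ps_x=\th_x\o\ell\o q:\R^n\to\R^n$ of degree $\le d$ and generic rank $n$ taking a full neighborhood of $0$ into $X$, showing $f\o\ps_x$ is real analytic (\Cref{lem:1tom}, which uses smoothness of $f$ and \Cref{res:A} to pass from $1$-plots to $n$-plots), and then invoking the Eakin--Harris/Gabrielov theorem (\Cref{thm:EakinHarris}) to conclude that $T_xf$ converges because $T_xf\o\ps_x=T_0(f\o\ps_x)$ does; that theorem is exactly the nontrivial ingredient your inversion would need to replace. (Your Baire/continuity step is also unsupported --- it yields uniform constants only on some relatively open piece of $X$, not at $x_0$ --- but the paper avoids uniformity questions altogether by producing at each boundary point a convergent series agreeing with $f$ on a fat set and gluing via simplicity, the identity theorem, and \Cref{prop:gluing}.)
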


We shall see in \Cref{ex:1} that $d = d(X)$ is optimal:
in general, 
$\sC^\om_{1,d}(X) \cap \cC^\infty|_X \not\subseteq \cC^\om|_X$ if $d < d(X)$.

If we do not restrict the degree of the polynomial plots, we may infer:

\begin{corollary} \label{cor:UPC}
   Let $X \subseteq \R^n$ be a simple closed set such that for each $z \in \p X$ there exists 
   a closed UPC set $X_z$ with $z \in X_z \subseteq X$.
   Then
   \[
    \sC^\om_{1}(X) \cap \cC^\infty|_X = \cC^\om|_X.
   \]
\end{corollary}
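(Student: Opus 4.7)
The inclusion $\cC^\om|_X \subseteq \sC^\om_1(X) \cap \cC^\infty|_X$ is immediate. For the converse, fix $f \in \sC^\om_1(X) \cap \cC^\infty|_X$; the plan is to produce, around each point of $X$, a real analytic function on a neighborhood that agrees with $f$ on $X$, and then to glue these local extensions. Every affine parametrization of a line hitting $X^\o$ is a polynomial $1$-plot in $X$, so the Bochnak--Siciak theorem (\Cref{res:A}) applied in each open ball contained in $X^\o$ shows $f|_{X^\o}$ is real analytic; denote a local real analytic extension of $f$ near a point of $X^\o$ by $\tilde f$.

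The substantive task is to construct a real analytic extension of $f$ near a boundary point $z \in \partial X$. First one checks that $f|_{X_z} \in \sC^\om_{1,d}(X_z) \cap \cC^\infty|_{X_z}$ for $d := d(X_z)$: every polynomial $1$-plot in $X_z$ is one in $X$, and any smooth extension of $f$ across $X$ restricts to a smooth extension across $X_z$. If $X_z$ is simple, \Cref{thm:UPC} applied to $X_z$ yields a real analytic function $G_z$ on an open neighborhood $W_z$ of $X_z$ with $G_z|_{X_z} = f|_{X_z}$. If $X_z$ is not simple, one replaces it in advance by a simple UPC subset of $X$ still containing $z$, built as a thin polynomial tubular ``cusp'' around the UPC polynomial curve $h_z$ at $z$ supplied by the UPC structure of $X_z$ (so $h_z(0)=z$ and $\on{dist}(h_z(t),\R^n \setminus X_z) \ge M t^m$). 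Such a tube inherits closedness, fatness, and a UPC characteristic, and is visibly simple at $z$.

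To turn $G_z$ into a real analytic extension of $f$ itself near $z$, not just of $f|_{X_z}$, one uses that $X_z$ is fat, so $X_z^\o$ is a nonempty open subset of $X^\o$. On $X_z^\o$ the functions $G_z$ and $\tilde f$ are two real analytic functions both equal to $f$. Invoking simplicity of $X$ at $z$, shrink $W_z$ to an open neighborhood $V_z$ of $z$ with $V_z \cap X^\o$ connected; the identity theorem then forces $G_z = \tilde f$ on all of $V_z \cap X^\o$, and continuity together with $X = \ol{X^\o}$ upgrades this to $G_z|_{V_z \cap X} = f|_{V_z \cap X}$. Finally, the local extensions $G_z$ at boundary points and $\tilde f$ at interior points agree on overlaps (once more by the identity theorem, as any two overlap on an open subset of $X^\o$) and assemble to a real analytic extension of $f$ on an open neighborhood of $X$ in $\R^n$, as required.

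I expect the main obstacle to be the reduction from a general UPC set $X_z$ to a simple UPC subset still containing $z$ in its boundary; geometrically this is natural --- thickening the cuspidal curve $h_z$ into a simple tubular set --- but verifying the UPC characteristic of the tube and its simplicity precisely at $z$ requires some bookkeeping with the constants $(m,D,M)$ in the UPC definition. Every other ingredient is either \Cref{thm:UPC}, \Cref{res:A}, or the identity theorem combined with simplicity of $X$ at $z$.
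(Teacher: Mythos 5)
Your proof follows the same route as the paper's: \Cref{thm:UPC} applied to each $X_z$, followed by gluing via \Cref{prop:gluing}, the identity theorem, and simplicity of $X$. The one point where you go beyond the paper --- observing that $X_z$ need not be simple and proposing to replace it by a simple tubular cusp around $h_z$ --- flags a genuine wrinkle (the statement of \Cref{thm:UPC} does require simplicity), but your replacement is only sketched, and verifying simplicity of such a polynomial tube at \emph{all} of its boundary points (not merely at $z$) is not obvious, since polynomial images can self-intersect. A cleaner repair is to invoke the \emph{proof} of \Cref{thm:UPC} rather than its statement: simplicity of $X_z$ enters there only in the final identity-theorem step, whereas the construction of the fat set $\Pi_z$ with $z\in\p\Pi_z$, $\Pi_z^\o\subseteq X_z^\o$, and of a real analytic $F_z$ with $f=F_z$ on $U_z\cap\Pi_z^\o$ needs no simplicity at all; simplicity of $X$ itself then propagates the identity $f=F_z$ to $U_z\cap X^\o$ exactly as in your final paragraph.
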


For instance, the corollary applies to the compact subset $X = X' \cup \bigcup_{m =1}^\infty X_m$ of $\R^2$, 
where $X_m := \overline{\Ga}^{1/m}_2(\frac{1}{m^2},\frac{1}{m}) + (2\sum_{\ell =1}^{m}\frac{1}{\ell^2}- \frac{1}{m^2}, -\frac{1}{m})$
and $X' = [0, \tfrac{\pi^2}3] \times [0,1]$. It is simple, since the cusps $X_m$ converge to the point $(\tfrac{\pi^2}3,0)$.
The spikes $X_m$ can be replaced by suitable horn-like sets.

\subsubsection{Plot-analytic functions that are not presupposed to be smooth}
Next we will discuss extensions of \Cref{res:B} to closed fat sets. In the following, we will not assume that $f$ is smooth.

By a \emph{simplex} in $\R^n$ we mean the convex hull of any collection of $n+1$ affinely independent points in $\R^n$.

\begin{theorem} \label{thm:Lipschitz}
   Let $X \subseteq \R^n$, $n \ge 2$, be a simple closed set such that for each $z \in \p X$ there exists 
   a simplex $X_z$ with $z \in X_z \subseteq X$.
Then
\[
    \sC^\om_{2,2}(X) = \cC^\om|_X.
\]
\end{theorem}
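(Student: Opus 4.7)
First, I would show that $f|_{X^\o}$ is real analytic. For any $x \in X^\o$, choose an open ball $B \subseteq X^\o$ around $x$. For each affine $2$-plane $L$ meeting $B$ and each $y \in L \cap B$, a small disk around $y$ in $L$ is contained in $B \subseteq X$ and is the image of an affine $2$-plot (degree $1 \le 2$). By hypothesis, $f$ composed with this plot is real analytic at $0$, so $f|_{L \cap B}$ is real analytic near $y$. Applying \Cref{res:B} on $B$ yields $f|_B$ real analytic, hence $f|_{X^\o}$ is real analytic.

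Next, fix $z \in \p X$ and pick a simplex $X_z \subseteq X$ containing $z$. If $z \in X_z^\o$ then $z \in X^\o$ and we are done. Otherwise, after replacing $X_z$ with a smaller simplex inside $X_z$ (the convex hull of $z$ and $n$ suitable points near $z$ in $X_z^\o$), I may assume $z$ is a vertex of $X_z$ with linearly independent edge vectors $u_1, \ldots, u_n$ at $z$, lying in the interior of the tangent cone $T_z X_z$. For each pair $i \ne j$ and each $c = (c_k)_{k \ne i, j}$ with $c_k \ge 0$ small, the quadratic 2-plot
\[
    p(s, t) := z + \sum_{k \ne i, j} c_k u_k + s^2 u_i + t^2 u_j
\]
has image in $X_z \subseteq X$ for $(s, t)$ near $0$ by convexity. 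By hypothesis $f \circ p$ is real analytic at $0 \in \R^2$, and its invariance under $(s, t) \mapsto (\pm s, \pm t)$ forces $f \circ p(s, t) = G_{ij}^c(s^2, t^2)$ for some $G_{ij}^c$ real analytic near $(0, 0) \in \R^2$. Hence the a priori one-sided map $(a, b) \mapsto f\bigl(z + \sum_{k \ne i, j} c_k u_k + a u_i + b u_j\bigr)$, $a, b \ge 0$, extends real analytically to a full neighborhood of $(0, 0) \in \R^2$, and on the interior of the positive orthant these extensions agree with $f$ by unique analytic continuation.

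Finally, I would glue these 2-dim extensions into a real analytic function on a neighborhood of $z$ in $\R^n$. Setting $\tilde f(a) := f(z + \sum_i a_i u_i)$ on a neighborhood of $0$ in the positive orthant, the previous step provides $\tilde f$ real analytic on the open orthant, together with real analytic extensions in each coordinate pair at every nearby point of the closed orthant (including $0$). Moreover, varying the directions over the tangent cone, the $(2,2)$-plot $z + s^2 v_1 + t^2 v_2$ for general $v_1, v_2$ in $T_z X_z$ yields that $\tilde f$ extends real analytically on every affine $2$-plane through $0$ spanned by directions in the orthant (equivalently, any $2$-plane after decomposing each direction into its positive and negative parts). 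A Hartogs-Siciak-type separate-analyticity argument then upgrades this pairwise analyticity to joint real analyticity of a candidate extension $\widehat{\tilde f}$ on a full neighborhood of $0 \in \R^n$: one matches the Taylor coefficients of the various $G_{ij}^c$ to establish $C^\infty$-smoothness of $\widehat{\tilde f}$ at $0$, and applies \Cref{res:A} (every line through $0$ lies in one of the analytic $2$-planes just described). Pulling back via the affine isomorphism $a \mapsto z + \sum a_i u_i$ yields a real analytic extension of $f$ across $z$; combined with the first step this shows $f \in \cC^\om|_X$.

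The main obstacle is this last gluing step. The $(2,2)$-plot hypothesis directly gives joint analyticity only on $2$-dim slices, and elevating it to joint analyticity in all $n$ variables at $z$ requires carefully matching Taylor coefficients of the $G_{ij}^c$ across different pairs to establish $C^\infty$-smoothness of the candidate extension at $z$, together with uniform radius-of-convergence estimates as $c$ varies.
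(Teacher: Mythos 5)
Your opening step (analyticity on $X^\circ$ via \Cref{res:B}) and your use of the squaring substitution together with $(\pm s,\pm t)$-invariance on two-dimensional slices are both sound, and they are in the spirit of the paper's argument. But the step you yourself flag as ``the main obstacle'' --- gluing the pairwise extensions $G^c_{ij}$ into a joint real analytic extension in all $n$ variables --- is a genuine gap, and it is exactly where the real content of the theorem lies. Concretely: for $n\ge 3$ the pairwise extensions do not even determine a candidate function on a full neighborhood of $0$ (a point with three or more negative coordinates is reached by none of your two-plane extensions), \Cref{res:A} cannot be applied because smoothness of the candidate at $z$ is not established, and \Cref{res:B} cannot be applied because the candidate is not defined on an open set with analytic restrictions to \emph{all} $2$-planes. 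Matching Taylor coefficients of the various $G^c_{ij}$ to get $C^\infty$-smoothness at the vertex is not a routine verification; it is an unproved separate-vs-joint analyticity claim on a closed orthant.

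The missing idea is to apply the Hartogs-type argument \emph{before} descending to the orthant, not after. Since any $(n,2)$-plot $p$ is defined on an \emph{open} set $U\subseteq\R^n$ and its restriction to any affine $2$-plane meeting $U$ is a $(2,2)$-plot in $X$, \Cref{res:B} applies directly to $f\circ p$ on $U$ and shows $f\in\sC^\om_{2,2}(X)\Rightarrow f\in\sC^\om_{n,2}(X)$ (this is \Cref{lem:2tom}; no smoothness is needed). One then uses the single quadratic $n$-plot $q(x)=(x_1^2,\dots,x_n^2)$ composed with the affine map sending the standard simplex onto $X_z$: the composite $f\circ\ell\circ q$ is jointly real analytic on an open neighborhood of $0$ in $\R^n$, and it is invariant under the group $(\Z/2)^n$ of sign changes. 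The descent from ``$f\circ q$ analytic'' to ``$f$ extends analytically across the vertex'' is then a theorem of Luna on invariants of reductive groups (or, alternatively, Glaeser--Schwarz for a smooth extension followed by the Eakin--Harris/Gabrielov strong injectivity theorem, \Cref{thm:EakinHarris}, for convergence of the Taylor series); this is \Cref{lem:invariant1}. Your two-variable version of this descent is correct, but without first passing to the $n$-variable plot you cannot reach it, and the theorem does not follow from the two-dimensional slices alone.
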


This is best possible; see \Cref{ex:2}.
The assumption is fulfilled for all Lipschitz sets. 
At this stage, we do not know if there is an analogue of \Cref{thm:Lipschitz} 
for H\"older sets or simple fat closed subanalytic sets.
But in dimension two, we can give a fairly complete answer.
Here (for technical reasons) we need to work with 
\begin{equation*}
    d'(X) := 2 \min_{(m,D) \in \on{char}(X)} m \cdot D. 
\end{equation*}
For all compact H\"older sets $X$ we have $d'(X) = d(X)$;
in general, $d'(X)\ge d(X)$.

\begin{theorem} \label{thm:2dim}
   Let $X \subseteq \R^2$ be a simple compact fat subanalytic or H\"older set and let $d':= d'(X)$. 
Then 
\[
    \sC^\om_{2,d'}(X)= \cC^\om|_X.
\]
\end{theorem}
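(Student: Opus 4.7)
The plan is to prove Theorem \ref{thm:2dim} directly by showing that any $f \in \sC^\om_{2, d'}(X)$ extends real-analytically to a neighborhood of $X$, hence lies in $\cC^\om|_X$. First I would verify analyticity on the interior $X^\o$: for each $y \in X^\o$, the affine $(2, 1)$-plot $(s, t) \mapsto y + (s, t)$ has image in $X$ for $(s, t)$ near the origin, so $f$ composed with it is analytic at $(0, 0)$, and hence $f$ is analytic at $y$. The heart of the argument is to propagate analyticity across each boundary point $x \in \p X$.

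Since $X$ is compact fat subanalytic or H\"older, it is UPC. Fix $(m, D) \in \on{char}(X)$ minimizing $m D$, so that $d' = 2 m D$, and let $h_x: \R \to \R^2$ be a UPC polynomial curve of degree $\le D$ with $h_x(0) = x$ and $\on{dist}(h_x(t), \R^2 \setminus X) \ge M t^m$ for $t \in [0, 1]$. Fix a unit vector $e \in \R^2$ transverse to $h_x'(0)$. I would use two $(2, d')$-plots at $x$,
\[
p_x(s, t) := h_x(t) + M s t^m e, \qquad q_x(s, t) := h_x(s^2 + t^2) + M s^{2 m} e,
\]
of degrees $\max(D, m + 1)$ and $\max(2 D, 2 m)$ respectively, both bounded by $d'$. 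The image-in-$X$ condition near the origin follows from the UPC depth estimate combined with $|M s t^m| \le M t^m$ for $|s| \le 1$ and $M s^{2 m} \le M (s^2 + t^2)^m$. Let $G_x(s, t) = \sum_{i, j \ge 0} b_{ij}\, s^i t^j$ be the Taylor expansion of $f \o p_x$, which converges on a bidisk $|s| < \rho_1$, $|t| < \rho_2$. Since $f \o q_x$ is even in each variable, it factors as $K_x(s^2, t^2)$ for an analytic germ $K_x(u, v) = f(h_x(u + v) + M u^m e)$ on a polydisk.

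On the real wedge-interior $\{h_x(u + v) + M u^m e : u, v > 0\}$, both plots parameterize the same point via $(t, s) = (u + v, u^m / (u + v)^m)$, yielding the identity
\[
K_x(u, v) = G_x\Bigl(\frac{u^m}{(u + v)^m},\, u + v\Bigr) = \sum_{i, j \ge 0} b_{ij}\, u^{m i}\, (u + v)^{j - m i},
\]
which extends by analytic continuation to a complex polydisk around the origin. For $K_x$ to be holomorphic there, the pole-contributing terms (those with $j < m i$) must cancel; grouping by the power of $u + v$ and using linear independence of the polynomials $u^{m i}$ for distinct $i$ (noting that $u^{m i}$ is not divisible by $u + v$), each such term must vanish individually, forcing $b_{i j} = 0$ whenever $j < m i$. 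Consequently $G_x(s, t) = \sum_{i \ge 0} (s t^m)^i \phi_i(t)$ with $\phi_i(t) := \sum_{k \ge 0} b_{i, m i + k}\, t^k$, so in local ambient coordinates $(\xi_1, \xi_2)$ at $x$ aligned with $(e, h_x'(0))$ (i.e., $\xi_1 = M s t^m$, $\xi_2 = t$) one obtains $f = \sum_{i \ge 0} M^{-i} \xi_1^i\, \phi_i(\xi_2)$. A direct estimate using the absolute convergence of $G_x$ on its bidisk shows that this series converges on a rectangular neighborhood $\{|\xi_1| < M \rho_1 \rho_2^m,\, |\xi_2| < \rho_2\}$ of $x$ in $\R^2$, establishing real analyticity of $f$ at $x$.

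The principal obstacle is the subanalytic case, where $X$ near a boundary point $x$ may decompose into finitely many cuspidal wedges. I would run the construction above separately for each wedge using a UPC curve adapted to it, and then invoke the simplicity of $X$: the per-wedge analytic extensions agree on $X^\o$ (which is locally connected at $x$) and therefore piece together into a single real-analytic extension of $f$ across $x$. The H\"older case is easier thanks to the uniform cusp property. Combined with analyticity on $X^\o$, this yields $f \in \cC^\om|_X$.
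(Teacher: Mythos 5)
Your strategy is genuinely different from the paper's (which runs through the dihedral invariant map $\si(x,y)=(x^2+y^2,\Re((x+iy)^d))$, the Glaeser--Schwarz/Luna invariant theory, the Eakin--Harris strong injectivity theorem, and, in the subanalytic case, a reduction to smoothness via Whitney regularity and \Cref{thm:UPC}), but it has a fatal gap at the very first step: $p_x(s,t) = h_x(t) + M s t^m e$ is in general \emph{not} a $(2,d')$-plot in $X$. A plot must map a full neighborhood of the origin in $\R^2$ into $X$, whereas the UPC estimate $\on{dist}(h_x(t),\R^2\setminus X)\ge M t^m$ only holds for $t\in[0,1]$; for $t<0$ the point $h_x(t)$ need not lie in $X$ at all. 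This is not a technicality: for the horn $H=\{0\le x\le 1,\ x^{D}\le y\le 2x^{D}\}$ of \Cref{ex:1}, any UPC curve through the vertex exits $H$ for $t<0$, and the paper explicitly notes there that every polynomial $1$-plot through $0$ must have degree at least $2D$ --- precisely because one must "fold" the parameter (e.g.\ replace $t$ by $t^2$) to stay inside a one-sided cusp. Consequently $f\o p_x$ is not known to be real analytic and your series $G_x$ need not exist. The obvious repair $h_x(t^2)+Ms\,t^{2m}e$ has degree $\max(2D,2m+1)$, which exceeds $d'=2mD$ when $D=1$, so it tests $f$ against plots outside $\cP_{2,d'}(X)$ and no longer proves the stated theorem. (Your second plot $q_x$ does avoid this problem, which is exactly why the paper's constructions are built entirely from such "even" substitutions.)

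Two further points would need attention even if the first were fixed. First, the UPC definition permits $h_x'(0)=0$ (the paper writes $h_x(t)=t^p\tilde h_x(t)$ with $p\ge1$), in which case $(\xi_1,\xi_2)\mapsto h_x(\xi_2)+\xi_1 e$ is not a local diffeomorphism at the origin and your "local ambient coordinates" are singular; the paper circumvents this degeneracy by invoking \Cref{thm:EakinHarris}, which only requires generic rank $n$. Second, the cancellation step --- regrouping $\sum_{i,j}b_{ij}u^{mi}(u+v)^{j-mi}$ by powers of $u+v$ and concluding $b_{ij}=0$ for $j<mi$ --- requires a justification of the rearrangement of a conditionally assembled double series and of the asserted linear independence; this is plausible but is exactly the kind of formal-versus-convergent issue that the Eakin--Harris theorem is designed to handle cleanly. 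Your final gluing paragraph (per-wedge extensions agreeing on $X^\o$ by simplicity and the identity theorem) is sound and mirrors the localization argument of \Cref{prop:gluing}.
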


It is likely that in this statement $d' = d'(X)$ is not optimal.

\begin{corollary} \label{cor:2dim}
   Let $X \subseteq \R^2$ be a simple closed set such that for each $z \in \p X$ there exists 
   a compact fat subanalytic set $X_z$ with $z \in X_z \subseteq X$.
   Then 
\[
    \sC^\om_{2}(X)= \cC^\om|_X.
\]
\end{corollary}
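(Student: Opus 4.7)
The inclusion $\cC^\om|_X \subseteq \sC^\om_2(X)$ is immediate. For the converse, fix $f \in \sC^\om_2(X)$; since $X = \overline{X^\o}$ and real analyticity is local, it suffices to extend $f$ to a real analytic function on a neighborhood in $\R^2$ of each $z \in X$.

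I first handle the interior. For $z \in X^\o$, choose a closed ball $B \subseteq X^\o$ centered at $z$: $B$ is a simple compact fat subanalytic set with $d'(B) = 2$, so $f|_B \in \sC^\om_2(B) \subseteq \sC^\om_{2,2}(B)$, and \Cref{thm:2dim} gives $f|_B \in \cC^\om|_B$. Hence $f|_{X^\o}$ is real analytic and admits a holomorphic extension $\tilde f$ to some open complex neighborhood $\tilde V \subseteq \C^2$ of $X^\o$.

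For $z \in \p X$, the plan is to construct a simple compact fat subanalytic set $Y_z \subseteq X_z$ with $z \in Y_z$. Since $X_z$ is fat, its tangent cone at $z$ contains a nonempty open subanalytic cone, so by standard subanalytic \L{}ojasiewicz approximation one can fit inside $X_z$, for a sufficiently small radius around $z$, a closed Lipschitz wedge with apex $z$; this yields $Y_z$, which is Lipschitz (hence simple) with $d'(Y_z) = 2$. Applying \Cref{thm:2dim} to $Y_z$ produces a real analytic function $F$ on an open neighborhood $U$ of $Y_z$ in $\R^2$ (with a holomorphic extension to a complex neighborhood $V$) satisfying $F|_{Y_z} = f|_{Y_z}$.

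It remains to verify $F|_X = f$ on some neighborhood $W \subseteq U$ of $z$. On $Y_z^\o \subseteq X^\o$, both $F$ and $\tilde f$ coincide with $f$; by the identity theorem they agree on the connected component of $V \cap \tilde V$ containing $Y_z^\o$. Simplicity of $X$ at $z$ ensures $X^\o$ is connected in a small ball around $z$, so this component contains a complex neighborhood of $X^\o \cap W$ after shrinking $W$, giving $F = \tilde f = f$ on $X^\o \cap W$. For $x \in \p X \cap W$, the hypothesis applied at $x$ yields a compact fat subanalytic $X_x \subseteq X$ with $x \in X_x$; since $X_x$ is UPC there exists a polynomial curve $c$ with $c(0) = x$ and $c((0, \ep)) \subseteq X_x^\o \subseteq X^\o$. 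Using $\sC^\om_2(X) \subseteq \sC^\om_1(X)$, both $f \o c$ and $F \o c$ are real analytic germs at $0$ whose difference vanishes on $(0, \ep)$, hence at $0$, giving $F(x) = f(x)$. The main obstacle is the construction of $Y_z$ from the tangent cone of $X_z$; once that is in place, the patching is essentially formal, combining the identity theorem with the simplicity of $X$ and the UPC structure of the sets $X_x$.
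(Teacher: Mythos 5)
Your reduction to the Lipschitz case breaks down at the key step. You claim that, because $X_z$ is fat, its tangent cone at $z$ contains a nonempty open cone, and hence that a closed Lipschitz wedge $Y_z$ with apex $z$ fits inside $X_z$. This is false: fatness does not force the tangent cone to have nonempty interior. Take $X_z=\{(x,y)\in\R^2: |x|\le 1,\ \sqrt{|x|}\le y\le 1\}$, a compact fat semialgebraic (hence subanalytic) set whose tangent cone at the origin is the single ray $\{x=0,\ y\ge 0\}$; no Lipschitz wedge with apex at the origin is contained in it. Such sharp cusps are exactly the situation the hypothesis of the corollary is designed to allow (it only asks for \emph{some} compact fat subanalytic $X_z$ through $z$), and they are the reason \Cref{thm:2dim} is stated with the degree $d'(X)$ rather than with $d'=2$. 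By invoking \Cref{thm:2dim} only for balls and Lipschitz wedges ($d'=2$), you have bypassed the entire content of the subanalytic case of that theorem, and the argument does not cover a set as simple as the example above.

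The intended route is more direct: for each $z\in\p X$, every $(2,d)$-plot in $X_z$ is a $(2,d)$-plot in $X$, so $f\in\sC^\om_2(X)$ gives $f|_{X_z}\in\sC^\om_{2,d}(X_z)$ for \emph{every} $d$, in particular for $d=d'(X_z)$, whatever that number is. Applying \Cref{thm:2dim} to $X_z$ itself (after arranging, if necessary, that $X_z$ is simple near $z$, e.g.\ by passing to the closure of a local connected component of $X_z^\o$ at $z$) yields $f|_{X_z}\in\cC^\om|_{X_z}$, and then \Cref{prop:gluing} finishes the proof; this is precisely why the corollary can drop the uniform degree bound. Your treatment of interior points and your patching argument (identity theorem on $X_z^\o$, simplicity of $X$ to propagate to $U\cap X^\o$, and polynomial curves from the UPC structure to pin down boundary values) are sound and in fact supply details that \Cref{prop:gluing} leaves implicit, but they cannot rescue the missing construction of $Y_z$.
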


Note that for each H\"older set $X'_z$ with $z \in X'_z \subseteq X$ we may find a 
compact fat subanalytic set $X_z$ with $z \in X_z \subseteq X'_z$; the converse is not always possible.

\subsubsection{Curve-analytic functions}
We give an application to curve-analytic functions on closed fat sets 
which extends a recent result of Kucharz and Kurdyka \cite{Kucharz:2023aa}, see \Cref{res:C} below.

Let $X\subseteq \R^n$ be nonempty and $f : X \to \R$ a function.
We say that $f$ is \emph{curve-analytic} if for each real analytic arc $c : (-\ep,\ep) \to \R^n$ with 
image contained in $X$ 
there exist $\de \in (0,\ep]$ and a real analytic function $F : U \to \R$ defined on an open neighborhood $U$ of $c(0)$ in $\R^n$
such that $c(t) \in U$ and  $F(c(t)) = f(c(t))$ for all $t \in (-\de,\de)$.

\begin{remark}
   If $X$ is a real analytic manifold, then $f : X \to \R$ is curve-analytic if and only if 
   $f|_C$ is real analytic 
for every locally irreducible real analytic set $C$ of dimension $1$ in $X$; see \cite[Lemma 2.2]{Kucharz:2023aa}.
(That $f|_C$ is real analytic means that for each $x \in C$ there is a neighborhood $U$ of $x$ in $X$ and a real analytic function $F : U \to \R$
such that $f|_{C\cap U} = F|_{C \cap U}$.)
\end{remark}

\begin{theoremA}[{\cite{Kucharz:2023aa}}] \label{res:C}
A subanalytic function $f : X \to \R$ on a real analytic manifold $X$ 
is real analytic if and only if it is curve-analytic. 
\end{theoremA}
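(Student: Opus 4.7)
The forward direction is immediate: if $f$ is real analytic, any local real analytic extension of $f$ to a neighborhood of $c(0)$ in $\R^n$ serves as the required function $F$, so $f$ is curve-analytic. The work is in the converse. Since real analyticity is local, fix $x_0 \in X$, choose an analytic chart, and reduce to the case that $X \subseteq \R^n$ is open around $x_0$. The plan is to combine subanalyticity---which constrains the global shape of $f$ and confines its singular behavior to a lower-dimensional subanalytic set---with curve-analyticity---which forces analytic behavior along every real analytic path---and to conclude by a Bochnak--Siciak-type argument (\Cref{res:A}).

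First I would show that $f$ is continuous. The set of discontinuity points of the subanalytic function $f$ is itself subanalytic; if it were nonempty, the curve selection lemma for subanalytic sets would produce a real analytic arc $c : [0,\ep) \to X$ along which $f$ oscillates near $0$, contradicting continuity of $f \o c$ (which is forced by curve-analyticity). A continuous subanalytic function is classically real analytic on the complement of a closed subanalytic subset $\Sigma \subsetneq X$ with $\dim \Sigma \le n-1$, so the rest of the argument reduces to showing $\Sigma = \emptyset$.

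Proceeding by contradiction, suppose $\Sigma \neq \emptyset$ and pick $x_1$ in the top-dimensional smooth stratum of $\Sigma$, so that in suitable analytic local coordinates near $x_1$ one has $\Sigma = \{x_{k+1} = \cdots = x_n = 0\}$. For each $v \in \R^n$ transverse to $\Sigma$, curve-analyticity applied to $c_v(t) := x_1 + tv$ yields a real analytic extension of $f \o c_v$ with some radius $r(v) > 0$ and Taylor coefficients $a_j(v)$. I would use subanalyticity to upgrade these per-direction data to uniform {\L}ojasiewicz-type bounds on $|a_j(v)|$ as $v$ ranges over a compact set, via a preparation theorem or a direct quantitative estimate; combined with smoothness of $f$ near $x_1$ (established by an analogous arc argument using derivatives), \Cref{res:A} would then give real analyticity of $f$ in a full neighborhood of $x_1$, contradicting $x_1 \in \Sigma$.

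The hard part is precisely this joint extension. Curve-analyticity gives only pointwise Taylor germs and no intrinsic control of how the coefficients $a_j(v)$ depend on $v$; without such control one cannot pass from line-by-line analytic germs to a jointly convergent series in $n$ variables. The role of subanalyticity is to supply the missing uniformity, and converting that uniformity into a genuine joint real analytic extension across $\Sigma$---while simultaneously verifying the smoothness hypothesis needed to apply \Cref{res:A}---is what I expect to be the technical heart of the proof.
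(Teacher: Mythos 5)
First, a caveat: the paper does not prove \Cref{res:C} at all --- it is imported from \cite{Kucharz:2023aa} --- so there is no ``paper proof'' to match. The only hint the paper gives about the actual argument is the remark at the end of Section~2: the key criterion in \cite{Kucharz:2023aa} is the $(p,q)$-test, i.e.\ real analyticity of $t \mapsto f(x+at^p\xi+bt^q\nu)$ \emph{together with} the condition that the support of this germ lies in $\N p+\N q$. Your plan takes a genuinely different route, and it contains a gap that I do not believe can be repaired along the lines you propose.

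The gap is that your converse argument funnels entirely through restrictions to affine lines $c_v(t)=x_1+tv$ and an application of \Cref{res:A}, whereas the decisive information in curve-analyticity is carried by \emph{singular} arcs, which your outline never uses. Concretely, let $f(x,y)=x^3/(x^2+y^2)$ with $f(0,0)=0$. This $f$ is continuous, semialgebraic, arc-analytic, real analytic off the origin (so $\Sigma=\{0\}$ has codimension $2$), and real analytic on every affine line, with Taylor coefficients along lines through $0$ as uniform as one could wish, since $f(tv)=t\,f(v)$ by homogeneity; yet $f$ is not even differentiable at $0$. Every stage of your outline --- continuity, analyticity off a lower-dimensional $\Sigma$, per-direction analytic germs with uniform \L ojasiewicz-type bounds --- is satisfied by this $f$, so if your argument closed it would ``prove'' that $f$ is analytic. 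What actually fails is curve-analyticity along the singular arc $c(t)=(t^3,t^5)$: one has $f(c(t))=t^3/(1+t^4)=t^3-t^7+\cdots$, whose support contains $7\notin 3\N+5\N$, while $F(c(t))$ for any analytic $F$ has support in $3\N+5\N$; hence no local analytic extension can match $f$ along $c$. This support obstruction is exactly the content of the $(p,q)$-test and is the mechanism by which \cite{Kucharz:2023aa} excludes such examples. Relatedly, the smoothness hypothesis of \Cref{res:A}, which you propose to ``establish by an analogous arc argument using derivatives,'' is precisely where the difficulty concentrates: arc-analytic subanalytic functions need not be $C^1$ (same example), so smoothness must be extracted from the extension property along singular arcs, not from behavior along lines. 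Any correct proof must exploit the plots $t\mapsto x+at^p\xi+bt^q\nu$ for all pairs $(p,q)$ (or equivalent singular curves); without them, the plan cannot distinguish $f$ above from a real analytic function.
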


It is an open question, if the assumption that $f$ is subanalytic is necessary.

By definition, if $f : X \to \R$ is curve-analytic, then $f$ is arc-analytic, in particular, $f \in \sC^\om_1(X)$. 
Thus, any \emph{smooth} curve-analytic $f : X \to \R$ 
is the restriction of a real analytic function, for all $X$ satisfying the assumptions of \Cref{cor:UPC}. 
More interestingly, without presupposing that $f$ is smooth, we have:   

\begin{theorem} \label{thm:curve-analytic}
   Let $X\subseteq \R^n$ be a simple closed set 
   such that one of the following conditions is satisfied:
   \begin{enumerate}
       \item $n$ is arbitrary and for each $z \in \p X$ there is a simplex $X_z$ with $z \in X_z \subseteq X$.
       \item $n=2$ and for each $z \in \p X$ there is a compact fat subanalytic set $X_z$ with $z \in X_z \subseteq X$.
   \end{enumerate}
   Let $f : X \to \R$ be any function
   such that $f|_{X_z}$ is subanalytic for all 
   $X_z$ appearing in \thetag{1} or \thetag{2}.
   Then $f$ is curve-analytic if and only if $f \in \cC^\om|_X$.
\end{theorem}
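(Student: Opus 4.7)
The direction $\cC^\om|_X \Rightarrow f$ curve-analytic is immediate: a real analytic extension $F$ of $f$ to a neighborhood of $X$ in $\R^n$ witnesses the curve-analytic condition along every real analytic arc $c:(-\varepsilon,\varepsilon)\to X$, since then $F\circ c = f\circ c$ locally.

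For the converse, assume $f$ is curve-analytic. I plan to invoke \Cref{thm:Lipschitz} in case (1) and \Cref{cor:2dim} in case (2), reducing the task to showing $f \in \sC^\om_{2,2}(X)$ or $f \in \sC^\om_2(X)$, respectively. Fix a polynomial $2$-plot $p : \R^2 \to \R^n$ with image in $X$ (of degree at most $2$ in case (1)); we must show that $g := f \circ p$ is a real analytic germ at $0 \in \R^2$. I would verify this via \Cref{res:C} (Kucharz--Kurdyka), applied to $g$ on a neighborhood of $0$ in the real analytic manifold $\R^2$: it suffices to check that $g$ is subanalytic and curve-analytic there. Curve-analyticity of $g$ is straightforward: given a real analytic arc $\gamma$ through a point $y_0$ near $0$, the composition $c := p\circ\gamma$ is a real analytic arc in $X$; the real analytic extension $F$ of $f$ along $c$ furnished by curve-analyticity of $f$ pulls back to $F\circ p$, a real analytic extension of $g$ along $\gamma$ on $p^{-1}(\mathrm{dom}\,F)$.

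The essential task is subanalyticity of $g$. My strategy is to first apply the appropriate plot theorem to each $X_z$ individually. Every $X_z$ satisfies the hypotheses of \Cref{thm:Lipschitz} in case (1) (simplices are Lipschitz) or \Cref{thm:2dim} in case (2) (compact fat subanalytic). For any polynomial $2$-plot $q$ with image in $X_z$, the composition $f|_{X_z}\circ q$ is subanalytic (as the composition of the subanalytic function $f|_{X_z}$ with the polynomial $q$, whose graph is the preimage of $\mathrm{graph}(f|_{X_z})$ under $(q,\mathrm{id})$) and curve-analytic (by the same pullback argument), so by \Cref{res:C} it is a real analytic germ at $0$. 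The plot theorem applied to $X_z$ then yields $f|_{X_z}\in\cC^\om|_{X_z}$: there is an open $U_z\subseteq\R^n$ containing $X_z$ and a real analytic $F_z : U_z\to\R$ extending $f|_{X_z}$. Using curve-analyticity of $f$ together with the identity theorem for real analytic functions of one variable (applied along a real analytic arc in $X$ from an arbitrary $x\in X\cap U_z$ to a point of $(X_z)^\o$, which exists by fatness of $X_z$ and simplicity of $X$), one upgrades this to $F_z|_{X\cap U_z} = f|_{X\cap U_z}$. Consequently $f$ is real analytic, and in particular subanalytic, on an open neighborhood of every boundary point of $X$; this yields the desired subanalyticity of $g$ at $0$ whenever $p(0)\in\partial X$.

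For plots $p$ with $p(0)\in X^\o$ not lying in any $U_z$, one extends real analyticity of $f$ from the neighborhood of $\partial X$ just constructed into the deep interior of $X^\o$ by analytic continuation along real analytic arcs in $X^\o$; curve-analyticity of $f$ supplies the local real analytic representatives, and simplicity of $X$ (together with connectedness of $X^\o$ on each component after passing through $\partial X$-adjacent regions) ensures that the continuation is single-valued and consistent with the boundary-neighborhood extensions. The main obstacle is precisely this final propagation: showing that the analytic germs produced by curve-analyticity glue to a real analytic extension across all of $X^\o$, and agree with the $F_z$ on overlaps. Once established, the patched function is a real analytic extension of $f$ to an open neighborhood of $X$ in $\R^n$, giving $f\in\cC^\om|_X$.
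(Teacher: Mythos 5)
Your treatment of the easy direction and of plots contained in some $X_z$ is exactly the paper's argument: the composite of $f|_{X_z}$ with such a plot is subanalytic and curve-analytic, hence real analytic by \Cref{res:C} (this is \Cref{lem:KK}), and applying \Cref{thm:Lipschitz}, resp.\ \Cref{thm:2dim}, to the set $X_z$ itself yields $f|_{X_z}\in\cC^\om|_{X_z}$. The divergence is in what you do with this. The paper stops here: it feeds the resulting local extensions at the boundary points directly into \Cref{prop:gluing} and never has to consider a plot whose image is \emph{not} contained in some $X_z$. You instead aim for the global membership $f\in\sC^\om_{2,2}(X)$ (resp.\ $f\in\sC^\om_{2}(X)$) so as to quote \Cref{thm:Lipschitz} or \Cref{cor:2dim} for $X$ itself, and this forces you to handle plots based at interior points of $X$ lying in no $X_z$, where the hypotheses provide no subanalyticity whatsoever.

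That is where the genuine gap sits, and you have correctly flagged it, but your proposed repair does not work. Curve-analyticity supplies, for each arc $c$, an analytic $F$ with $F\o c=f\o c$; this ties $F$ to $f$ only on the one-dimensional image of $c$, and for $n\ge 2$ the identity theorem gives nothing from agreement on a curve: the germs produced by different arcs through a point need not agree on any neighborhood, and $f$ need not coincide with any of them off the arc, so there is no ``analytic continuation of $f$ along arcs'' into the deep interior. Deciding whether a curve-analytic function on an open set is real analytic \emph{without} a subanalyticity hypothesis is precisely the open question recorded after \Cref{res:C}, so this step cannot be closed by the method you describe. (Your one-variable identity-theorem argument for upgrading $F_z=f$ from $X_z$ to $U_z\cap X^\o$ is sound, since there the two functions being compared are analytic on a whole parameter interval of the arc and agree on a subinterval.) The fix is to drop the detour through global plot-analyticity: once $f|_{X_z}\in\cC^\om|_{X_z}$ for every $z\in\p X$, conclude with \Cref{prop:gluing} as the paper does, which confines the use of interior information to the single hypothesis that $f$ is real analytic on $X^\o$ rather than demanding analyticity of $f\o p$ for arbitrary plots.
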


All the results are proved in \Cref{sec:proofs}. 
The examples in \Cref{sec:examples} show that the connection between the degree of polynomial plots
that recognize real analyticity and the regularity of the boundary is optimal
and complement the investigation.

\section{Proofs} \label{sec:proofs}

\subsection{Localization of the problems}

\Cref{prop:gluing} shows that it suffices to study the 
problems locally at boundary points. First we recall a lemma from \cite{Rainer18}.

\begin{lemma}[{\cite[Lemma 6.1]{Rainer18}}] \label{lem:glue}
  Let $X \subseteq \R^n$ be closed and $U \subseteq \R^n$ open with $U \cap X \ne \emptyset$. 
  Then there exists an open subset $U_0 \subseteq U$ with $U_0 \cap X = U \cap X$ 
  such that 
  for all $x \in U_0$ and all $a \in X$ that realize the distance of $x$ to $X$, i.e., $\on{dist}(x,a) = \on{dist}(x,X)$,
  the line segment $[x,a]$ is contained in $U_0$.
\end{lemma}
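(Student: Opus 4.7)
The plan is to take $U_0$ to be a single superlevel set of a continuous function, tailored so that the defining inequality propagates along line segments to nearest points. Specifically, I would set
\[
U_0 := \{x \in U : \on{dist}(x, \R^n \setminus U) > \on{dist}(x,X)\}
\]
(with the convention that the left-hand distance is $+\infty$ if $U = \R^n$). This $U_0$ is open as the preimage of $(0,+\infty)$ under a continuous function of $x$, and the identity $U_0 \cap X = U \cap X$ is immediate: on $X$ the right-hand distance vanishes, while the left-hand distance is positive exactly when $x \in U$ (since $U$ is open).

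The segment property is the only substantive step. Fix $x \in U_0$ and any $a \in X$ realizing $|x-a| = \on{dist}(x,X)$. For $y = (1-t)x + ta$ with $t \in [0,1]$, two applications of the triangle inequality yield
\[
\on{dist}(y, X) \leq (1-t)|x-a|, \qquad \on{dist}(y, \R^n \setminus U) \geq \on{dist}(x, \R^n \setminus U) - t|x-a|.
\]
Subtracting gives
\[
\on{dist}(y, \R^n \setminus U) - \on{dist}(y, X) \geq \on{dist}(x, \R^n \setminus U) - \on{dist}(x,X) > 0,
\]
so $y \in U_0$, as required.

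There is no deep obstacle here; the whole argument hinges on the \emph{strict} inequality in the definition of $U_0$, since it is precisely the slack between the two distances at $x$ that survives the triangle-inequality reduction along the segment $[x,a]$. Independence of the estimate from the choice of nearest point $a$ is automatic, as only $|x-a| = \on{dist}(x,X)$ enters the bound. The one thing to keep in mind is to resist the temptation to weaken $>$ to $\geq$ in the definition of $U_0$, which would destroy both the openness of $U_0$ and the segment-closedness argument.
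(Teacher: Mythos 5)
Your proof is correct: the set $U_0=\{x: \on{dist}(x,\R^n\setminus U)>\on{dist}(x,X)\}$ is open, meets $X$ exactly in $U\cap X$, and the $1$-Lipschitz estimates for the two distance functions propagate the strict inequality along the segment $[x,a]$ exactly as you write. The paper itself does not reprove this lemma but cites \cite[Lemma 6.1]{Rainer18}, where the argument is essentially this same construction, so nothing further is needed.
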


\begin{proposition} \label{prop:gluing}
    Let $X\subseteq \R^n$ be a closed fat set.
    Let $f : X^\o \to \R$ be real analytic.
    Suppose that for all $x \in \p X$ there exists a neighborhood $U_x$ of $x$ in $\R^n$ and 
    a real analytic function $F_x : U_x \to \R$ such that $F_x|_{U_x \cap X^\o} = f|_{U_x \cap X^\o}$.
    Then there exists a real analytic extension of $f$ to an open neighborhood of $X$.
\end{proposition}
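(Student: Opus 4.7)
The plan is to patch together the local extensions $F_x$ with $f$ using an open cover of $X$ obtained by applying \Cref{lem:glue} to each neighborhood $U_x$. Concretely, for each $x \in \p X$ let $U_x^0 \subseteq U_x$ be the shrunken open set provided by \Cref{lem:glue}, so that $U_x^0 \cap X = U_x \cap X$ and for every $z \in U_x^0$ and every closest point $a \in X$ to $z$ the segment $[z,a]$ lies in $U_x^0$. Put
\[
    V := X^\o \cup \bigcup_{x \in \p X} U_x^0.
\]
This is open, and since $x \in U_x \cap X = U_x^0 \cap X \subseteq U_x^0$ for each $x \in \p X$, we have $V \supseteq X$. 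On $V$ define $F := f$ on $X^\o$ and $F := F_x$ on $U_x^0$. Provided the pieces are compatible, $F$ is real analytic on $V$ by the hypotheses, and we are done.

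Compatibility on $U_x^0 \cap X^\o = U_x \cap X^\o$ is immediate from the hypothesis $F_x|_{U_x \cap X^\o} = f|_{U_x \cap X^\o}$. The real content is to show $F_x = F_y$ on $U_x^0 \cap U_y^0$. Fix $z$ in this overlap and choose $a \in X$ with $\on{dist}(z,a) = \on{dist}(z,X)$. Applying \Cref{lem:glue} separately to $U_x$ and to $U_y$ yields $[z,a] \subseteq U_x^0 \cap U_y^0$; in particular the connected component $C$ of $U_x^0 \cap U_y^0$ containing $z$ also contains $a$. Because $X$ is fat, $a \in \ol{X^\o}$, so the open neighborhood of $a$ in $U_x^0 \cap U_y^0$ provided by $C$ meets $X^\o$, and since $X^\o$ is open the intersection $C \cap X^\o$ is a nonempty open subset of $\R^n$. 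On that open set both $F_x$ and $F_y$ coincide with $f$, so the identity theorem for real analytic functions forces $F_x \equiv F_y$ throughout $C$; in particular $F_x(z) = F_y(z)$.

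The only subtle step is ensuring that the points of $X^\o$ near $a$ where $F_x$ and $F_y$ are both known to equal $f$ actually belong to the \emph{same} connected component of $U_x^0 \cap U_y^0$ as the test point $z$. This is exactly what the segment-preservation property of \Cref{lem:glue} buys: it joins $z$ to $a$ inside the overlap, after which fatness lets us wiggle into $X^\o$ without leaving that component. Once this compatibility is established, $F$ is a well-defined real analytic extension of $f$ to the open neighborhood $V$ of $X$, completing the proof.
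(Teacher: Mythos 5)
Your proof is correct and follows essentially the same route as the paper: shrink each $U_x$ via \Cref{lem:glue}, use the segment-preservation property to connect a point $z$ of an overlap to a nearest point $a\in X$ within the same connected component, use fatness to find a nonempty open piece of $X^\o$ in that component, and conclude by the identity theorem. The only cosmetic difference is that the paper additionally replaces each $(U_x)_0$ by its connected component containing $x$, which your argument does not need since you work componentwise in the overlaps anyway.
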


\begin{proof}
    Let us show that $f$ and the $F_x$ glue coherently to a global real analytic extension of $f$.
  Invoking \Cref{lem:glue}, we replace $U_x$ by the connected component of $(U_x)_0$ that contains $x$. 
  Then the open cover $\{U_x : x \in \p X\}\cup \{X^\o\}$ of $X$ has the property that for each $z \in U_x$ 
  and each $a \in X$ that realizes the distance of $z$ to $X$ 
  the segment $[z,a]$ lies in $U_x$. 
  Let $V$ be a connected component of $U_x \cap U_y$.
  For each $z \in V$ and each $a \in X$ that realizes the distance of $z$ to $X$
  the line segment $[z,a]$ lies in $V$. 
  Since $X$ is fat, $V$ has nonempty intersection with $X^\o$,
  and on this intersection $F_x$ and $F_y$ coincide with $f$, by assumption.
  By the identity theorem, $F_x$ and $F_y$ coincide on $V$ and hence on $U_x \cap U_y$, since the connected component $V$ was arbitrary.
\end{proof}

\subsection{Strongly injective homomorphisms}

We will use the following result.

\begin{theorem}[{\cite{EakinHarris77, Gabrielov73}}] \label{thm:EakinHarris}
        Let $\Ph : \K\{X_1,\ldots,X_n\} \to \K\{Y_1,\ldots,Y_k\}$ be a homomorphism 
        of convergent power series rings over the field $\K$.
        Then the following conditions are equivalent:
        \begin{enumerate}
            \item $\Ph$ is \emph{strongly injective}, i.e., if $\hat \Ph$ denotes the natural extension of $\Ph$ to formal power series, 
            then $\hat \Ph(f) \in \K\{Y_1,\ldots,Y_k\}$ implies $f \in \K\{X_1,\ldots,X_n\}$. 
            \item The generic rank of $\Ph$ is $n$.
        \end{enumerate}
\end{theorem}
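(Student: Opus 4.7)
The implication (1) $\Rightarrow$ (2) is elementary, and I would argue the contrapositive. Suppose the generic rank of $\Ph$ is $r < n$. Writing $\varphi_i := \Ph(X_i) \in \K\{Y_1,\dots,Y_k\}$, the rank hypothesis says that the differentials $d\varphi_1,\dots,d\varphi_n$ are everywhere linearly dependent, so by the classical analytic rank theorem the image germ of $\varphi = (\varphi_1,\dots,\varphi_n)$ sits inside a proper analytic subset of $(\K^n,0)$; equivalently, $\ker \Ph \neq 0$. I would then pick any $g \in \ker \Ph \setminus \{0\}$ and any $h \in \K[[X_1,\dots,X_n]] \setminus \K\{X_1,\dots,X_n\}$. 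The product satisfies $\hat \Ph(gh) = \hat\Ph(g)\,\hat\Ph(h) = 0 \in \K\{Y\}$, while faithful flatness of $\K[[X]]$ over $\K\{X\}$ (which gives $g\K[[X]] \cap \K\{X\} = g\K\{X\}$) combined with $h \notin \K\{X\}$ forces $gh \notin \K\{X\}$. This contradicts strong injectivity.

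For (2) $\Rightarrow$ (1) I would split the argument in two. \emph{Step A.} Artin approximation over the Henselian ring $\K\{Y\}$, applied to the analytic equation $\Ph(F) = g$ in the unknown power series $F$, shows that any $f \in \K[[X]]$ with $\hat \Ph(f) = g \in \K\{Y\}$ admits a convergent approximant $\tilde f \in \K\{X\}$ with $\Ph(\tilde f) = g$; the difference $f - \tilde f$ then lies in $\ker \hat \Ph$. Strong injectivity thereby reduces to the inclusion $\ker \hat \Ph \subseteq \K\{X\}$. \emph{Step B.} The hypothesis generic rank $= n$ is precisely $\ker \Ph = 0$, so the task becomes to upgrade convergent independence of $\varphi_1,\dots,\varphi_n$ to formal independence, i.e.\ to prove $\ker \hat \Ph = 0$.

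The heart of the theorem is Step B, and this is where I expect the main obstacle to lie. It is the substantive content of Gabrielov's paper \cite{Gabrielov73} (with the algebraic counterpart due to Eakin--Harris \cite{EakinHarris77}). The classical route applies a generic linear change in the $Y$-variables followed by Weierstrass preparation to reduce $\Ph$ to a finite extension of regular local rings, and then proceeds by an induction on the number of $Y$-variables which carefully tracks how formal relations can ramify in the formal fibres of $\varphi$; the delicate point is that a naive comparison of formal and convergent ranks fails without the Weierstrass normal form. I would not attempt to improve on this subtle argument and would simply quote \cite{Gabrielov73, EakinHarris77} verbatim, which appears to be the authors' intent since the statement is introduced with the words ``We will use the following result''.
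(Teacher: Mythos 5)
The paper never proves this statement: it is imported verbatim from \cite{EakinHarris77,Gabrielov73} (``We will use the following result''), so your closing remark correctly identifies the intended treatment, and deferring the substance to those papers is legitimate. The problem is that the two arguments you do supply as ``elementary'' are both false, and they fail on exactly the phenomena that make this theorem deep. For (1) $\Rightarrow$ (2), the step ``generic rank $r<n$ implies $\ker\Ph\neq 0$'' is wrong: the rank theorem applies where the rank is \emph{constant}, and near points where the rank drops the image germ need not lie in any proper analytic subset. Osgood's classical example $\varphi(u,v)=(u,\,uv,\,uve^{v})$ has generic rank $2<3$, yet $\ker\hat\Ph=0$ (hence also $\ker\Ph=0$): if $g=\sum c_{ijk}X_1^iX_2^jX_3^k$ satisfies $g\circ\varphi=0$, then collecting the coefficient of $u^{m}$ gives $\sum_{j+k\le m} c_{m-j-k,\,j,\,k}\,v^{j+k}e^{kv}=0$, and linear independence of $1,e^{v},\dots,e^{mv}$ over the rational functions in $v$ forces all $c_{ijk}=0$. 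So neither your argument nor its obvious repair (producing a divergent element of $\ker\hat\Ph$) can work; showing that Osgood's map is not strongly injective requires actually constructing a divergent $f$ whose composite with $\varphi$ is convergent, which is genuinely nontrivial. (Your faithful-flatness computation is fine as far as it goes: \emph{if} $\ker\Ph\neq0$, then strong injectivity fails; the error is deducing $\ker\Ph\neq0$ from rank deficiency.)

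Steps A and B of your (2) $\Rightarrow$ (1) are also unsound. The equation $\Ph(F)=g$ in the unknown $F\in\K\{X_1,\dots,X_n\}$ is a \emph{composition} (substitution) equation, not a system of analytic equations in unknown functions of the $Y$-variables, and Artin approximation does not apply to it: Gabrielov's celebrated counterexample produces a germ $\varphi$ admitting a nonzero formal $F$ with $F\circ\varphi=0$ while the only convergent solution of that equation is $F=0$, so no convergent solution can approximate the formal one --- the failure of exactly this approximation statement is the reason the subject is delicate. The same example, together with Osgood's, refutes Step B's dictionary: $\ker\Ph=0$ does \emph{not} imply generic rank $n$, and the task you reduce to --- upgrading $\ker\Ph=0$ to $\ker\hat\Ph=0$ --- is false as stated (Gabrielov's example has $\ker\Ph=0\neq\ker\hat\Ph$); the theorem genuinely needs the rank hypothesis, not injectivity of $\Ph$. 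Since in the end you, like the paper, must simply cite \cite{EakinHarris77,Gabrielov73}, the correct version of your write-up is the citation alone: the reductions you added are not harmless scaffolding but claims contradicted by classical counterexamples.
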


\subsection{Proof of \Cref{thm:UPC} and \Cref{cor:UPC}}

\begin{lemma} \label{lem:1tom}
   Let $X \subseteq \R^n$ be nonempty and closed, $f \in \cC^\infty|_X$, and $d$ a positive integer.
   Then the following conditions are equivalent.
   \begin{enumerate}
       \item $f \in \sC^\om_{1,d}(X)$.
       \item $f \in \sC^\om_{m,d}(X)$ for all $m \ge 1$.
       \item $f \in \sC^\om_{m,d}(X)$ for some $m \ge 1$.
   \end{enumerate} 
\end{lemma}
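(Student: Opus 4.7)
The plan is to establish the chain $(2) \Rightarrow (3) \Rightarrow (1) \Rightarrow (2)$. The first implication is immediate from the definition, and the second is almost as easy: given any $(1,d)$-plot $c$ in $X$, the map $p(x_1,\dots,x_m) := c(x_1)$ is an $(m,d)$-plot in $X$ (same degree, image contained in $X$ on a neighborhood of $0$), so if $f \o p$ is a real analytic germ at $0 \in \R^m$ then its restriction $f \o c$ to the first coordinate axis is a real analytic germ at $0 \in \R$.

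The substantive direction is $(1) \Rightarrow (2)$, and the tool is Bochnak--Siciak's \Cref{res:A}. Fix an $(m,d)$-plot $p : \R^m \to \R^n$, and let $U_0 \subseteq \R^m$ be an open neighborhood of $0$ on which $p$ takes values in $X$. Since $f \in \cC^\infty|_X$, there is a smooth extension $F$ of $f$ on some open neighborhood of $X$ in $\R^n$, and (after shrinking $U_0$) the composite $g := F \o p$ is a smooth function on the open set $U_0$ agreeing with $f \o p$ there. To conclude that $g$ is real analytic on $U_0$ it suffices, by \Cref{res:A}, to check that the restriction of $g$ to every affine line meeting $U_0$ is real analytic.

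For this, fix $y \in U_0$ and $v \in \R^m$, and consider the polynomial curve $c_{y,v}(t) := p(y+tv)$. Since $p$ has total degree at most $d$, each component of $c_{y,v}$ has degree at most $d$ in $t$; and since $y \in U_0$ is in the open set where $p$ maps into $X$, we have $c_{y,v}(t) \in X$ for all $t$ sufficiently small. Hence $c_{y,v}$ is a germ in $\cP_{1,d}(X)$. By hypothesis (1), $f \o c_{y,v}$ is real analytic at $0$, which means exactly that $t \mapsto g(y+tv)$ is real analytic at $t=0$. As $y$ and $v$ vary, we obtain real analyticity of the restriction of $g$ to every affine line at every point where that line meets $U_0$, which is the hypothesis of \Cref{res:A}. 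Therefore $g = f \o p$ is real analytic on $U_0$, in particular at $0$, proving $f \in \sC^\om_{m,d}(X)$.

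The only point that requires any care is the reduction to line restrictions in \Cref{res:A}: one must apply it to $g$ on the \emph{open} set $U_0 \subseteq \R^m$ (which is why the smooth extension $F$ is invoked, rather than working only with $f$ on $X$), and one must observe that the degree bound on $p$ is preserved under the affine restriction $t \mapsto y + tv$, so that the auxiliary curves $c_{y,v}$ remain $(1,d)$-plots and not just polynomial $1$-plots of uncontrolled degree. I expect no further obstacle.
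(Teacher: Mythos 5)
Your proof is correct and follows essentially the same route as the paper: the substantive implication $(1)\Rightarrow(2)$ is handled by composing the $(m,d)$-plot with affine lines to obtain $(1,d)$-plots and then invoking the Bochnak--Siciak Theorem~\ref{res:A}, while $(3)\Rightarrow(1)$ uses the same embedding of $\cP_{1,d}(X)$ into $\cP_{m,d}(X)$. Your explicit use of the smooth extension $F$ to justify that $f\o p$ is smooth on an open set is a slightly more careful rendering of a step the paper leaves implicit.
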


\begin{proof}
    (1) $\Rightarrow$ (2)
    Let $f \in \sC^\om_{1,d}(X)$, $m \ge 1$, and $p \in \cP_{m,d}(X)$. 
    Fix a representative of $p$, also denoted by $p$.
    So there exists an open neighborhood $U$ of $0 \in \R^m$ such that $p(U) \subseteq X$.
    Let $x \in U$, $v \in \mathbb S^{n-1}$, and $q(t) := x +t v$ for $t$ near $0 \in \R$ such that $q(t) \in U$.
    Then the germ of $q$ belongs to $\cP_{1,1}(U)$ and that of $p \o q$ to $\cP_{1,d}(X)$. 
    Thus, $f \o p \o q$ is real analytic, by (1). 
    Since $f \o p$ is smooth,
    $f \o p$ is real analytic, by \Cref{res:A}.
    (In fact, it is clear from the above that $f \o p|_{\ell \cap U}$ is real analytic, where $\ell$ is the affine line 
    generated by $q$.)

    (2) $\Rightarrow$ (3)
    Trivial.

    (3) $\Rightarrow$ (1)
This follows easily from the fact that $\cP_{1,d}(X)$ can be identified with a subset of $\cP_{m,d}(X)$ 
    (viewing $x_1 \mapsto p(x_1)$ as $(x_1,\ldots,x_m) \mapsto p(x_1)$).
\end{proof}

\begin{proof}[Proof of \Cref{thm:UPC}]
    The inclusion $\cC^\om|_X \subseteq \sC^\om_{1,d}(X) \cap \cC^\infty|_X$ is clear.
    Suppose that $f \in \sC^\om_{1,d}(X) \cap \cC^\infty|_X$. 
    By \Cref{res:A}, $f|_{X^\o}$ is real analytic.
    By \Cref{prop:gluing}, 
it suffices to show that 
for each $x \in \p X$ 
    there exist a fat set $\Pi_x$ such that $x \in \p \Pi_x$ and $\Pi_x^\o \subseteq X^\o$, 
    a neighborhood $U_x$ of $x$ in $\R^n$, and a real analytic function $F_x : U_x \to \R$ 
    such that $f|_{U_x \cap \Pi_x^\o}= F_x|_{U_x \cap \Pi_x^\o}$.
Indeed, since $X$ is simple, we may assume (after possibly shrinking $U_x$) that $U_x \cap X^\o$ 
    is connected. 
    Thus, $f|_{U_x \cap X^\o}= F_x|_{U_x \cap X^\o}$, by the identity theorem, and
    \Cref{prop:gluing} implies the statement.

Fix $x \in \p X$. Since $X$ is a UPC set with $d = d(X)$, 
we have $d = 2 \max\{m,D\}$ for some $(m,D) \in \on{char}(X)$. 
So there is a polynomial curve $h_x : \R \to \R^n$ of degree at most $D$ 
such that 
$h_x(0)=0$ and \label{page:proof11}
\[
    \on{dist}(h_x(t),\R^n \setminus X) \ge M\, t^m, \quad x \in X,\, t \in [0,1]. 
    \]
We have $h_x(t) = t^p \tilde h_x(t)$, for some integer $p\ge 1$, where $\tilde h_x(0) \ne  0$. 
Let $v_1 := \tilde h_x(0)/|\tilde h_x(0)|$ and choose unit vectors $v_2,\ldots,v_n$ in $\R^n$ 
such that $v_1,\ldots, v_n$ forms a basis of $\R^n$.
Then 
\[
    \th_x(s_1,\ldots,s_n) := h_x(s_1) + s_2^m v_2 + \cdots + s_n^m v_n
\]
is a polynomial map $\th_x : \R^n \to \R^n$ of degree at most $\max\{m,D\}$ which takes the set 
$C := \{(s_1,\ldots,s_2) : 0 \le s_1 \le 1, \, |s_i| \le \big(\tfrac{M}{2(n-1)}\big)^{1/m} s_1 \text{ for } i\ge 2\}$ into $X$. Indeed,
\begin{align*}
    \on{dist}(\th_x(s_1,\ldots,s_n), \R^n \setminus X) 
    &\ge \on{dist}(h_x(s_1), \R^n\setminus X) - |s_2|^m  - \cdots - |s_n|^m 
    \\
    &\ge M s_1^m - \sum_{j=2}^n  \tfrac{M}{2(n-1)} s_1^m
    \\
    &= \tfrac{M}{2} s_1^m, 
\end{align*}
if $(s_1,\ldots,s_n) \in C$.
Since $C$ is a $1$-set and $0 \in \p C$, 
there is a linear isomorphism $\ell : \R^n \to \R^n$ 
such that $\ell (S)\subseteq C$, where $S$ is the convex hull of $0,e_1,\ldots,e_n$.
There is an open neighborhood $U$ of $0$ in $\R^n$ such that 
$q : \R^n \to \R^n$ given by 
\[
    q(x_1,\ldots,x_n) := (x_1^2,\ldots,x_n^2)
\]
takes $U$ into $S$.
Thus the polynomial map $\ps_x := \th_x \o \ell \o q : \R^n \to \R^n$ of degree at most $d$ takes $U$ into $X$ 
and so we may consider the composite $f \o \ps_x|_U$. 
By \Cref{lem:1tom}, $f \o \ps_x|_U$ is real analytic. 
Since $f$ is smooth, by assumption, we have its formal Taylor series $F_x = T_x f$ at $x$. 
    Then the power series $F_x \o \ps_x = T_0 (f \o \ps_x)$ has positive radius of convergence. 
    By \Cref{thm:EakinHarris},
    $F_x$ has positive radius of convergence and thus defines a real analytic function, again denoted by $F_x$, 
    on an open neighborhood $U_x$ of $x$ in $\R^n$. 
    Clearly,
$f|_{U_x \cap \Pi_x}= F_x|_{U_x \cap \Pi_x}$, where $\Pi_x := \ps_x(U)$.
This shows the claim and hence completes the proof. 
\end{proof} 

\begin{proof}[Proof of \Cref{cor:UPC}] 
This follows easily from \Cref{thm:UPC} (applied to each $X_z$) and \Cref{prop:gluing}.
Note that $X$ is fat.
\end{proof} 

\subsection{Proof of \Cref{thm:Lipschitz}}
We need a variant of \Cref{lem:1tom}, where $f : X \to \R$ is not necessarily smooth.

\begin{lemma} \label{lem:2tom}
   Let $X \subseteq \R^n$ be nonempty, $f : X \to \R$ any function, and $d$ a positive integer.
   Then the following conditions are equivalent.
   \begin{enumerate}
       \item $f \in \sC^\om_{2,d}(X)$.
       \item $f \in \sC^\om_{m,d}(X)$ for all $m \ge 2$.
       \item $f \in \sC^\om_{m,d}(X)$ for some $m \ge 2$.
   \end{enumerate} 
\end{lemma}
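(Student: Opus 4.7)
The plan is to mimic the proof of \Cref{lem:1tom} verbatim, but replace the use of \Cref{res:A} (which requires smoothness of $f$) by the use of \Cref{res:B} (which does not). This is exactly the reason the base case is $m=2$ rather than $m=1$: \Cref{res:B} detects real analyticity via restriction to affine $2$-planes without a smoothness hypothesis.

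For the implication (1) $\Rightarrow$ (2), I would fix $m \ge 2$ and a plot $p \in \cP_{m,d}(X)$, choose a representative (still denoted $p$) with $p(U) \subseteq X$ for some open neighborhood $U$ of $0 \in \R^m$, and show that $f \o p : U \to \R$ is real analytic. Given any point $x \in U$ and any affine $2$-plane $\Pi \subseteq \R^m$ through $x$, parametrize $\Pi$ by an affine map $q : \R^2 \to \R^m$ of degree $1$; shrinking the domain we may assume $q$ maps a neighborhood of $0 \in \R^2$ into $U$. Then $p \o q : \R^2 \to \R^n$ is a polynomial map of degree at most $d$ whose germ at $0$ lies in $\cP_{2,d}(X)$, so $f \o p \o q$ is a real analytic germ at $0 \in \R^2$ by hypothesis. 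This means $f \o p$ is real analytic on the intersection of $\Pi$ with a neighborhood of $x$. Since $\Pi$ was an arbitrary affine $2$-plane through $x$, \Cref{res:B} applied in a small open ball around $x$ yields that $f \o p$ is real analytic near $x$, hence on all of $U$.

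The implication (2) $\Rightarrow$ (3) is immediate. For (3) $\Rightarrow$ (1), if $f \in \sC^\om_{m,d}(X)$ for some $m \ge 2$, then any $(2,d)$-plot $p : \R^2 \to \R^n$ in $X$ can be extended to an $(m,d)$-plot $\tilde p : \R^m \to \R^n$ by $\tilde p(x_1,\ldots,x_m) := p(x_1,x_2)$, which clearly still has image in $X$ near $0$; then $f \o p$ coincides with $f \o \tilde p$ restricted to the coordinate plane $\{x_3 = \cdots = x_m = 0\}$, hence is real analytic at $0 \in \R^2$.

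The only delicate point is the application of \Cref{res:B} in the first step: \Cref{res:B} is stated for a function defined on an open subset of $\R^m$ whose restriction to \emph{each} affine $2$-plane meeting the set is real analytic, and what we have directly is real analyticity along each affine $2$-plane through each point. Since real analyticity is local, it is enough to apply the theorem on a small open ball around each point of $U$, where every affine $2$-plane meeting the ball passes through some point of the ball and therefore falls under our hypothesis. This local reduction is routine and makes the proof go through without assuming $f$ smooth.
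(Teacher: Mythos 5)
Your proposal is correct and follows essentially the same route as the paper, which proves (1) $\Rightarrow$ (2) by composing the given plot with affine parametrizations of $2$-planes and invoking Theorem~\ref{res:B} in place of Theorem~\ref{res:A}, and handles (3) $\Rightarrow$ (1) by viewing $\cP_{2,d}(X)$ as a subset of $\cP_{m,d}(X)$ exactly as you do. Your extra remark on the local application of Theorem~\ref{res:B} is a harmless (and in fact unnecessary) precaution, since real analyticity of the restriction to a plane is already a pointwise-local condition.
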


\begin{proof}
    (1) $\Rightarrow$ (2)
    The proof is analogous to the one of \Cref{lem:1tom}
    except that now $q$ is a $(2,1)$-plot that 
     parameterizes a piece of an affine $2$-plane contained in $U$
     and
     we use \Cref{res:B}.

    (2) $\Rightarrow$ (3)
    Trivial.

    (3) $\Rightarrow$ (1)
    $\cP_{2,d}(X)$ can be seen as a subset of  
    $\cP_{m,d}(X)$ if $m\ge 2$. 
\end{proof}

\begin{lemma} \label{lem:invariant1}
    Let $q : \R^n \to \R^n$ be the map defined by $q(x_1,\ldots,x_n) := (x_1^2,\ldots,x_n^2)$.
    Let $I_r := (-r,r)^n \subseteq \R^n$. 
    Then $Q_r := q(I_r)$ is a neighborhood of $0$ in the first orthant 
    $Q := \{x \in \R^n : x_i \ge 0 \text{ for all } 1 \le i \le n\}$.  
    For any function $f: Q_r \to \R$ such that $f \o q : I_r \to \R$ is real analytic 
    there exists a real analytic function $F : U \to \R$ defined in a neighborhood $U$ of $0$ in $\R^n$
    such that $F|_{U \cap Q_r} = f|_{U \cap Q_r}$.
\end{lemma}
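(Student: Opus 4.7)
The plan is to exploit the invariance of $q$ under the sign-change action of $(\Z/2\Z)^n$ on $\R^n$. Since $q(\ep_1 x_1,\ldots,\ep_n x_n) = q(x_1,\ldots,x_n)$ for every choice of signs $\ep_i \in \{\pm 1\}$, the real analytic function $g := f \o q$ on $I_r$ is invariant under all such sign changes. Observe first that $Q_r = q(I_r) = \{y \in Q : y_i < r^2 \text{ for all } i\}$, which is manifestly an open neighborhood of $0$ in $Q$ with its subspace topology.

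Next I would expand $g$ in its Taylor series at $0$: there is a polydisc $P = \{x \in \R^n : |x_i| < \rho\}$ with $0 < \rho \le r$ on which
\[
    g(x) = \sum_{\alpha \in \N^n} a_\alpha\, x^\alpha
\]
converges absolutely. Applying the identity $g(\ep x) = g(x)$ and invoking the uniqueness of Taylor coefficients yields $a_\alpha = \ep^\alpha a_\alpha$ for every sign vector $\ep$, which forces $a_\alpha = 0$ whenever some component of $\alpha$ is odd. Hence only multi-indices of the form $2\beta = (2\beta_1,\ldots,2\beta_n)$ contribute, and one may define
\[
    F(y) := \sum_{\beta \in \N^n} a_{2\beta}\, y^\beta.
\]

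To verify convergence, note that the absolute convergence of $\sum_\beta |a_{2\beta}| \prod_i |x_i|^{2\beta_i}$ on $P$ translates, under the substitution $y_i = x_i^2$, into absolute convergence of $\sum_\beta |a_{2\beta}|\, y^\beta$ on the open polydisc $U := \{y \in \R^n : |y_i| < \rho^2\}$. Thus $F$ is real analytic on the neighborhood $U$ of $0$. Finally, for $y \in U \cap Q_r$ the components $y_i$ are nonnegative and less than $\rho^2$, so $x := (\sqrt{y_1},\ldots,\sqrt{y_n})$ lies in $P$ and satisfies $q(x) = y$; therefore $F(y) = F(q(x)) = g(x) = f(q(x)) = f(y)$, as required. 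I do not anticipate a serious obstacle here: the only points to handle with care are the invocation of uniqueness of Taylor coefficients to kill the odd parts and the bookkeeping of convergence radii across the substitution $y_i = x_i^2$.
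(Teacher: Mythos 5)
Your proof is correct, but it takes a genuinely more elementary route than the paper. The paper deduces the lemma from Luna's theorem on invariant analytic functions (with an alternative argument via a cut-off, the Glaeser--Schwarz theorem on smooth invariants to obtain a smooth extension of $f$, and then the Eakin--Harris strong injectivity theorem applied to $T_0 f$ to upgrade to analyticity). You instead exploit the very special structure of the group $(\Z/2\Z)^n$ acting by sign changes: invariance of the convergent series $f\circ q$ kills all Taylor coefficients with an odd entry, so $f\circ q$ is literally a convergent power series in $x_1^2,\ldots,x_n^2$, and the substitution $y_i=x_i^2$ hands you $F$ together with its radius of convergence. All the steps check out: the identification $Q_r=[0,r^2)^n$, the parity argument via uniqueness of Taylor coefficients, the transfer of absolute convergence from the polydisc $\{|x_i|<\rho\}$ to $\{|y_i|<\rho^2\}$, and the final evaluation at $x=(\sqrt{y_1},\ldots,\sqrt{y_n})$. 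What your approach buys is self-containedness --- no invariant theory, no formal-to-convergent descent theorem. What the paper's approach buys is robustness: for a general finite group the statement that every invariant analytic germ is an analytic function of the basic invariants is exactly Luna's theorem and is not a simple coefficient computation; in particular, for the dihedral invariants $\si_1,\si_2$ used in the companion Lemma~\ref{lem:invariant2}, your direct rearrangement would be considerably harder to carry out, whereas the paper's proof transfers verbatim.
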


\begin{proof}
    This is a consequence of a theorem of Luna \cite{Luna76}, 
since $x_1^2,\ldots,x_d^2$ are the basic invariants of the natural action of the group $G = \Z_2 \times \cdots \times \Z_2$ on 
$\R \times \cdots \times \R$.

Let us sketch an alternative proof. 
Take a smooth function $\ch : \R^n \to \R$ such that $\ch \equiv 1$ in a neighborhood of $0$ and $\ch \equiv 0$ 
outside $I_s$, where $s< r^2$. Then $\tilde f := \ch f$ extends by zero to a function defined on all of $Q = q(\R^n)$
and $\tilde f \o q : \R^n \to \R$ is a $G$-invariant smooth function.
Thus, by \cite{Glaeser63F} or \cite{Schwarz75}, there is a smooth function $g : \R^n \to \R$ such that $\tilde f \o q = g \o q$, 
i.e., $g$ extends $\tilde f$ to $\R^n$.
In particular, $f$ has a smooth extension to a neighborhood of $0$ in $\R^n$. 
If $F$ denotes the Taylor series at $0$ of $f$ (equivalently, of $g$),  
then $F \o q = T_0(f \o q)$ has positive radius of convergence, since $f\o q$ is real analytic. 
By \Cref{thm:EakinHarris}, $F$ converges and defines a real analytic function with the desired properties.
\end{proof}

\begin{proof}[Proof of \Cref{thm:Lipschitz}]
    We have to show $\sC^\om_{2,2}(X) \subseteq \cC^\om|_X$.
    Let $f \in \sC^\om_{2,2}(X)$. By \Cref{lem:2tom},   $f \in \sC^\om_{n,2}(X)$.
    Fix $z \in \p X$ and let $X_z$ be a simplex such that $z \in X_z \subseteq X$.
    There is an affine isomorphism $\ell : \R^n \to \R^n$ with $\ell(0) =z$ such that 
    $X_z$ is the image of the convex hull of $0,e_1,\ldots,e_n$ under $\ell$.
    So there is an open neighborhood $U$ of $0 \in \R^n$ such that $f \o \ell \o q : U \to \R$ 
    is well-defined and real analytic. 
    It follows from \Cref{lem:invariant1} that $f \o \ell|_{q(U)}$ has a real analytic extension to some neighborhood of $0$
    and thus $f|_{\ell(q(U))}$ has a real analytic extension to some neighborhood of $z$.
    Since $z$ was arbitrary, the theorem now follows from \Cref{prop:gluing}.
\end{proof}

\subsection{Proof of \Cref{thm:2dim} and \Cref{cor:2dim}}

Let $d \ge 3$ and
consider the map $\si = (\si_1,\si_2) : \R^2 \to \R^2$ defined by 
\[
    \si_1(x,y) := x^2 + y^2, \quad \si_2(x,y) := \Re((x+iy)^d).
\]
Then $\si(\R^2) = \{(x,y) \in \R^2 : x \ge 0,\, |y| \le x^{d/2}\} =:\Si$. 
In fact, in polar coordinates $\si_1(r e^{i\th}) = r^2$ and $\si_2(r e^{i\th}) = r^d \cos(d \th)$
so that $\si$ maps the circle with radius $r$ around $0$ onto the vertical line segment between the points $(r^2,\pm r^d)$.
The polynomials $\si_1$ and $\si_2$ are the basic invariants of the dihedral group $G=I^d_2$ 
consisting of the orthogonal transformations of $\R^2$ that preserve the regular $d$-gon.

\begin{lemma} \label{lem:invariant2}
    Let $I_r = (-r,r)^2 \subseteq \R^2$. 
    Then $\Si_r := \si(I_r)$ is a neighborhood of $0$ in $\Si$.  
    For any function $f: \Si_r \to \R$ such that $f \o \si : I_r \to \R$ is real analytic 
    there exists a real analytic function $F : U \to \R$ defined in a neighborhood $U$ of $0$ in $\R^2$
    such that $F|_{U \cap \Si_r} = f|_{U \cap \Si_r}$.
\end{lemma}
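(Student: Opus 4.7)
The plan is to adapt the alternative proof of \Cref{lem:invariant1} almost verbatim, substituting the dihedral group $G = I_2^d$ for the group $\Z_2 \times \cdots \times \Z_2$; by construction, $\si_1$ and $\si_2$ are the basic invariants of $G$.

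I would first verify that $\Si_r$ is a neighborhood of $0$ in $\Si$. In polar coordinates $\si(re^{i\th}) = (r^2, r^d \cos(d\th))$, so for each fixed $r \in [0,\rho]$ the circle of radius $r$ is mapped onto the vertical segment from $(r^2,-r^d)$ to $(r^2,r^d)$, and the image of the closed disk $\ol{B}(0,\rho)$ is exactly $\{(u,v) \in \Si : u \le \rho^2\}$. Since $I_r$ contains the open disk $B(0,r)$, $\Si_r$ contains $\{(u,v) \in \Si : u < r^2\}$, which is a neighborhood of $0$ in $\Si$.

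For the extension, choose a $G$-invariant smooth cutoff $\ch : \R^2 \to \R$ with $\ch \equiv 1$ near $0$ and support contained in a compact subset of $I_r$ (take any small rotationally symmetric bump). Since $f \o \si$ is real analytic on $I_r$, the function $h := \ch \cdot (f \o \si)$ is smooth on $I_r$ and extends by zero to a smooth $G$-invariant function on $\R^2$. By a theorem of Schwarz \cite{Schwarz75} (or \cite{Glaeser63F}), there exists a smooth function $\tilde f : \R^2 \to \R$ with $h = \tilde f \o \si$. Since $\ch \equiv 1$ near $0$, $\tilde f$ is a smooth extension of $f$ to a neighborhood of $0$ in $\R^2$.

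Finally, let $F$ be the formal Taylor series of $\tilde f$ at $0$. Then $F \o \si = T_0(\tilde f \o \si) = T_0 h = T_0(f \o \si)$ has positive radius of convergence, since $f \o \si$ is real analytic by hypothesis. Because $G$ is a finite reflection group acting effectively on $\R^2$, the invariant map $\si$ is a branched cover of $\Si$ of degree $|G| = 2d$; in particular, its Jacobian determinant is not identically zero, so its generic rank equals $2$. Thus \Cref{thm:EakinHarris} applies: $F$ converges and defines a real analytic function on a neighborhood $U$ of $0$ in $\R^2$ with $F|_{U \cap \Si_r} = f|_{U \cap \Si_r}$. The only non-formal step is the generic rank computation (together with the identification of $\si_1,\si_2$ as the basic invariants of $I_2^d$), which I regard as the main point to check; everything else is a direct transcription of the template provided by \Cref{lem:invariant1}.
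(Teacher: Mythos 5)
Your proposal is correct and follows essentially the same route as the paper, whose proof of this lemma is literally ``follow the proof of Lemma~\ref{lem:invariant1} and make the obvious modifications'': cutoff, Schwarz/Glaeser for the smooth $G$-invariant extension, then Eakin--Harris via the generic rank of $\si$ (which is $2$ since $\det D\si = -2d\,\Im(z^d)\not\equiv 0$). The only cosmetic difference is that you place the cutoff on the source side rather than on the image $\Si_r$, which changes nothing.
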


\begin{proof}
    Follow the proof of \Cref{lem:invariant1} and make the obvious modifications.  
\end{proof}

\begin{proof}[Proof of \Cref{thm:2dim}]
We first assume that $X \subseteq \R^2$ is a compact H\"older set.
Then $d:= d(X) =d'(X)$ and 
for each $x \in \p X$ 
there is an affine isomorphism $\ell : \R^2 \to \R^2$ with $\ell(0)=x$ 
and $r>0$ such that $\ell(\Si_r) \subseteq X$.
If $f \in \sC^\om_{2,d}(X)$,
then $f \o \si : I_r \to \R$ 
is well-defined and real analytic. 
Now the result follows from \Cref{lem:invariant2} and \Cref{prop:gluing}.

Let now  
$X \subseteq \R^2$ be a simple compact fat 
subanalytic set and $d' =d'(X)$.
Let $f \in \sC^\om_{2,d'}(X)$. 
Fix $x \in \p X$. There exist $(m,D) \in \on{char}(X)$ with $d' = 2 mD$, 
a polynomial curve $h_x$ of degree at most $D$, and a basis $v_1, v_2$ as in the proof of \Cref{thm:UPC} (on page \pageref{page:proof11}). 
Consider the polynomial map $\Th_x : \R^2 \to \R^2$ given by 
\[
    \Th_x(s_1,s_2) := h_x(s_1) + s_2 v_2.
\]
It has degree at most $D$ and takes the cusp $S =\{(s_1,s_2) : 0 \le s_1 \le 1,\, |s_2| \le \tfrac{M}2 s_1^m \}$
into $X^\o \cup \{x\}$.
The function $f \o \Th_x|_S$ belongs to $\sC^{\om}_{2,2m}(S)$ and hence it is the restriction of a real analytic 
function defined on a neighborhood of $S$, by \Cref{thm:2dim} for H\"older sets (since $d'(S)=d(S)=2m$).
We may compute 
\[
\p_{s_2}^k (f \o \Th_x)(s_1,s_2) = d^k_{v_2} f(\Th_x(s_1,s_2)), \quad k \in \N,
\]
for $(s_1,s_2) \in S^\o$ (since $f$ is real analytic in $X^\o$, by \Cref{res:B}) and 
letting $(s_1,s_2) \to (0,0)$ we see 
that the directional derivatives on the 
right-hand side extend continuously to $x$. 
This remains true for all $v_2$ in a small neighborhood of the chosen $v_2$. 
By the proof of \cite[Theorem 1.14]{Rainer18} or \cite[Theorem C]{Rainer:2021tr}, 
we find that $f$ is the restriction to $X$ of a $\cC^\infty$-function defined on $\R^2$.
The cited proof shows that the derivatives of all orders 
extend continuously to the boundary of $X$. Since compact fat subanalytic sets are Whitney $p$-regular, for some positive integer $p$,
these derivatives define a Whitney jet of class $\cC^\infty$ on $X$ which extends to a smooth function on $\R^2$.

Now it suffices to invoke \Cref{thm:UPC} and \Cref{lem:1tom} (since $d'(X) \ge d(X)$) 
in order to complete the proof of \Cref{thm:2dim}.
\end{proof}

\Cref{cor:2dim} follows easily from 
\Cref{thm:2dim} and \Cref{prop:gluing}.

\subsection{Proof of \Cref{thm:curve-analytic}}

If $f$ is the restriction of a real analytic function, then clearly $f$ is curve-analytic.
So suppose that $f$ is curve-analytic.

\begin{lemma} \label{lem:KK}
    Let $m\ge 1$.
    Let $p \in \cP_m(X)$ be such that $f \o p$ is subanalytic 
    (for instance, if the image of $p$ is contained in $X_z$ for some $z$).
    Then $f \o p$ is real analytic. 
\end{lemma}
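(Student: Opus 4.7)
The plan is to reduce to Theorem \ref{res:C} of Kucharz and Kurdyka by showing that $f \circ p$ is both subanalytic and curve-analytic on an open neighborhood of $0 \in \R^m$, which is a real analytic manifold.

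First I would pick a representative of $p$ and an open neighborhood $U$ of $0 \in \R^m$ such that $p(U) \subseteq X$; subanalyticity of $f \circ p$ on $U$ is given by hypothesis (and the remark in parentheses follows from composing a polynomial map with a subanalytic function $f|_{X_z}$). The main task is to verify that $f \circ p$ is curve-analytic on $U$. To that end, let $\gamma : (-\varepsilon,\varepsilon) \to U$ be any real analytic arc. Then $p \circ \gamma$ is a real analytic arc with image in $X$, so by the curve-analyticity of $f$ there exist $\delta \in (0,\varepsilon]$, an open neighborhood $V$ of $p(\gamma(0))$ in $\R^n$, and a real analytic function $F : V \to \R$ such that $p(\gamma(t)) \in V$ and $F(p(\gamma(t))) = f(p(\gamma(t)))$ for all $t \in (-\delta,\delta)$. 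Since $p$ is polynomial, $F \circ p$ is real analytic on the open neighborhood $p^{-1}(V) \cap U$ of $\gamma(0)$ in $\R^m$, and it agrees with $f \circ p$ along $\gamma|_{(-\delta,\delta)}$. This is exactly the defining condition for $f \circ p$ to be curve-analytic at $\gamma(0)$.

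Having established that $f \circ p : U \to \R$ is subanalytic and curve-analytic on the open (hence real analytic) manifold $U$, Theorem \ref{res:C} immediately yields that $f \circ p$ is real analytic on $U$, which is the desired conclusion.

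I do not expect any real obstacle here: the lemma is essentially a transport of Theorem \ref{res:C} through the polynomial map $p$, using only that polynomials are real analytic so that the extension $F$ near $p(\gamma(0))$ pulls back to a real analytic extension $F \circ p$ near $\gamma(0)$.
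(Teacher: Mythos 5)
Your proposal is correct and follows essentially the same route as the paper: push a real analytic arc $\gamma$ in $\R^m$ forward to the arc $p\circ\gamma$ in $X$, use the curve-analyticity of $f$ to get a local real analytic extension $F$ near $p(\gamma(0))$, pull it back via the polynomial $p$ to see that $f\circ p$ is curve-analytic, and then apply \Cref{res:C} to the subanalytic, curve-analytic function $f\circ p$ on an open subset of $\R^m$. The paper's proof is terser (it does not spell out that $F\circ p$ serves as the required local extension), but the argument is the same.
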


\begin{proof}
    Let $c : (-\ep,\ep) \to \R^m$ be a real analytic arc in the domain of definition of (a representative of) $p$. 
    Since $f$ is curve-analytic, there exist $\de \in (0,\ep]$ and a real analytic function $F : U \to \R$
    on a neighborhood $U$ of $p(c(0))$ in $\R^n$ 
    such that $p(c(t)) \in U$ and $F(p(c(t))) = f(p(c(t)))$ for all $t \in (-\de,\de)$. 
    Thus $f \o p$ is curve-analytic and subanalytic so that $f\o p$ is real analytic, by \Cref{res:C}.
\end{proof}

By \Cref{lem:KK}, $f|_{X_z} \in \sC^\om_{2}(X_z)$ and hence $f|_{X_z} \in \cC^\om|_{X_z}$, 
thanks to \Cref{thm:Lipschitz} and \Cref{thm:2dim}. Then \Cref{thm:curve-analytic} follows from \Cref{prop:gluing}.

\begin{remark}
    An important step in the proof of \Cref{res:C} in \cite{Kucharz:2023aa}
    is the criterion that an arc-analytic subanalytic function $f$ defined on an open subset $U$ of $\R^n$ 
    is real analytic at a point $x \in U$ 
    if and only if $f$ satisfies the \emph{$(p,q)$-test at $x$} for all pairs $(p,q)$ of positive integers.
    That means that for every pair of linearly independent vectors $\xi,\nu \in \R^n$ and all $a,b \in \R$,
    $\vh(t) := f(x + a t^p \xi + b t^q \nu)$ is a real analytic germ at $0 \in \R$ and 
    the support of $\vh$ is contained in $\N p + \N q$. 
\Cref{thm:curve-analytic} suggests that often not all $(p,q)$-tests are necessary to detect real analyticity.
\end{remark}

\section{Examples} \label{sec:examples}

The first example shows that 
in general the degree of polynomial test plots in \Cref{thm:UPC} is optimal.

\begin{example} \label{ex:1}
    Let $D$ be a positive integer.
  The horn $H = \{(x,y) \in \R^2 : 0 \le x \le 1, \, x^{D} \le y \le 2 x^D\}$ 
  is a UPC set with characteristic $(D,D)$ and $d(H)  = 2D$. 
  Indeed, every polynomial curve $h : \R \to \R^2$ with $h(0) = 0$ 
  and $h([0,1]) \subseteq H$ must be of degree at least $D$ and $\on{dist}(h(t),\R^2\setminus H)\lesssim t^D$ for $t \in [0,1]$. 
  Every polynomial $1$-plot in $H$ through $0$ must be of degree at least $2D$.
  
  The function $f : H \to \R$ given by $f(x,y) := e^{-1/(x^2 +y^2)}$ for $(x,y) \ne (0,0)$ and $f(0,0) := 0$ 
  belongs to $\cC^\infty|_H$ and to $\sC^\om_{1,2D-1}(H)$, simply because 
  $f$ is real analytic away from $0$ and for all $p \in \cP_{1,2D-1}(H)$ we have $p(0)  \ne 0$.

  On the other hand, $f \not\in \sC^{\om}_{1,2D}(H)$ either by \Cref{thm:UPC} or by 
observing that for $p(t):= (t^{2},t^{2D})$, which lies in $H$ for all $t$ near $0$, the composite
\[
(f\o p)(t) = \exp\Big(-\frac{1}{t^{4}(1+t^{4(D-1)}) }\Big) 
\]
has no analytic extension to $t=0$.
\end{example}

The following easy example shows that \Cref{thm:Lipschitz} is best possible.

\begin{example} \label{ex:2}
    Let $X = \{x \in \R^n : x_i \ge 0 \text{ for all } 1 \le i \le n\}$ be the first orthant.
    A plot $p \in \cP_{m,1}(X)$ ($m$ arbitrary) can satisfy $p(0) = 0$ only if $p\equiv 0$. 
    In other words, an arbitrary function $f \in \sC^\om_{m,1}(X)$ is unrestricted at $0$.
\end{example}

The next example shows that the assumption of being simple is essential.

\begin{example} \label{ex:simple}
  Consider the set $X_1 := \{(x,y) \in \R^2 : x\ge 0,\, x^{\sqrt 2} \le y \le x^{\sqrt 2}+x^2\}$. 
  Every smooth curve $c$ in $X_1$ must vanish to infinite order on $c^{-1}(0)$; see \cite[Example 6.7]{Rainer:2021tr}. 
  Thus $\sC^\om_m(X_1)$ ($m$ arbitrary) 
  contains elements that do not have a real analytic extension.

  Let $X_2 := \{(x,y) \in \R^2 : x\le 0\}$ be the left half-plane 
  and consider $X := X_1 \cup X_2$ which is not simple.
  But for each point $z$ of $\p X$ there exists a simple fat compact subanalytic set $X_z$ such that $z \in X_z \subseteq X$.
  Again it is easy to find (even smooth) elements of $\sC^\om_m(X)$ 
  that do not admit a real analytic extension.

  Note that it was wrongly stated in \cite[Corollary 1.17]{Rainer18} that, in a related result, the assumption 
  that the set is simple is not needed. 
\end{example}

There are algebraic sets $X$ in $\R^2$ for which $\sC^\om_{m}(X)$ ($m$ arbitrary) is essentially bigger than $\cC^\om|_X$: 

\begin{example}
    Consider the algebraic set $X = \{(x,y) \in \R^2 : x^3 = y^2\}$ and the function $\vh : X \to \R$ defined by $\vh(x,y) = y^{1/3}$. 
    Then $\vh \in \sC^\om_{m}(X)$ for all $m\ge 1$. Indeed, let $p=(p_1,p_2) \in \cP_m(X)$. The composite $u = \vh \o p$ 
    satisfies $u^2 = p_1$ and $u^3 = p_2$ so that $u$ is smooth, by a result of Joris \cite{Joris82}, and in turn real analytic, 
    by \cite[Proposition VI.3.11]{Malgrange67}. 
    
    In fact, we can completely describe $\sC^\om_{m}(X)$: for all $m \ge 1$, 
    \[
        \sC^\om_{m}(X) = \vh^* \cC^\om(\R) := \{g \o \vh : g \in \cC^\om(\R)\}.
    \]
    That $\vh^* \cC^\om(\R)$ is contained in $\sC^\om_m(X)$ is an easy consequence of the fact that $\vh \in \sC^\om_{m}(X)$.   
    Conversely, suppose that $f \in \sC^\om_m(X)$.
    Then $g(t) := f(t^2,t^3)$ defines an element $g \in \cC^\om(\R)$ satisfying $g(\vh(x,y)) = f(y^{2/3},y) = f(x,y)$.
\end{example}

%\bibliography{../../references/biblio}

\begin{thebibliography}{10}


\bibitem{Bochnak:2020tz}
J.~Bochnak, J.~Koll{\'{a}}r, and W.~Kucharz, \emph{Checking real analyticity on surfaces}, Journal de Math{\'{e}}matiques Pures et Appliqu{\'{e}}es \textbf{133} (2020), 167--171.

\bibitem{BochnakSiciak71}
J.~Bochnak and J.~Siciak, \emph{Analytic functions in topological vector spaces}, Studia Math. \textbf{39} (1971), 77--112.

\bibitem{Bochnak:1971aa}
\bysame, \emph{Analytic functions in topological vector spaces}, mimeographed preprint, IHES, 1971.

\bibitem{Bochnak:2018aa}
\bysame, \emph{A characterization of analytic functions of several variables}, Ann. Polon. Math. (2018), DOI:10.4064/ap180119-26-3.

\bibitem{EakinHarris77}
P.~M. Eakin and G.~A. Harris, \emph{When {$F(f)$} convergent implies {$f$} is convergent}, Math. Ann. \textbf{229} (1977), no.~3, 201--210. 

\bibitem{Gabrielov73}
A.~M. Gabri\`elov, \emph{Formal relations among analytic functions}, Izv. Akad. Nauk SSSR Ser. Mat. \textbf{37} (1973), 1056--1090.

\bibitem{Glaeser63F}
G.~Glaeser, \emph{Fonctions compos\'ees diff\'erentiables}, Ann. of Math. (2) \textbf{77} (1963), 193--209.

\bibitem{Joris82}
H.~Joris, \emph{Une {${\mathcal C}\sp{\infty }$}-application non-immersive qui poss\`ede la propri\'et\'e universelle des immersions}, Arch. Math. (Basel) \textbf{39} (1982), no.~3, 269--277.

\bibitem{Kucharz:2023aa}
W.~Kucharz and K.~Kurdyka, \emph{Analytic functions and {N}ash functions along curves}, Selecta Math. (N.S.) \textbf{29} (2023), no.~1, Paper No. 16, 12. 

\bibitem{Luna76}
D.~Luna, \emph{Fonctions diff\'erentiables invariantes sous l'op\'eration d'un groupe r\'eductif}, Ann. Inst. Fourier (Grenoble) \textbf{26} (1976), no.~1, ix, 33--49.

\bibitem{Malgrange67}
B.~Malgrange, \emph{Ideals of differentiable functions}, Tata Institute of Fundamental Research Studies in Mathematics, No. 3, Tata Institute of Fundamental Research, Bombay, 1967.

\bibitem{PawluckiPlesniak86}
W.~Paw{\l}ucki and W.~Ple{\'s}niak, \emph{Markov's inequality and {$C^\infty$} functions on sets with polynomial cusps}, Math. Ann. \textbf{275} (1986), no.~3, 467--480. 

\bibitem{Rainer18}
A.~Rainer, \emph{Arc-smooth functions on closed sets}, Compos. Math. \textbf{155} (2019), 645--680.

\bibitem{Rainer:2021tr}
\bysame, \emph{Arc-smooth functions and cuspidality of sets}, Journal d{\textquotesingle}Analyse Math{\'{e}}matique (2023), to appear, ar{X}iv:2112.14163.

\bibitem{Schwarz75}
G.~W. Schwarz, \emph{Smooth functions invariant under the action of a compact {L}ie group}, Topology \textbf{14} (1975), 63--68.

\bibitem{Siciak70}
J.~Siciak, \emph{A characterization of analytic functions of {$n$} real variables}, Studia Math. \textbf{35} (1970), 293--297.

\end{thebibliography}
%\bibliographystyle{amsplain}

\def\cprime{$'$}
\providecommand{\bysame}{\leavevmode\hbox to3em{\hrulefill}\thinspace}
\providecommand{\MR}{\relax\ifhmode\unskip\space\fi MR }
% \MRhref is called by the amsart/book/proc definition of \MR.
\providecommand{\MRhref}[2]{%
  \href{http://www.ams.org/mathscinet-getitem?mr=#1}{#2}
}
\providecommand{\href}[2]{#2}

\end{document}